\documentclass{amsart}[11pt]

\usepackage{amsmath}
\usepackage{amssymb}
\usepackage{amsthm}
\usepackage[mathscr]{eucal}
\usepackage{mathrsfs}
\usepackage{psfrag}
\usepackage{fullpage}
\usepackage{color}
\usepackage{eucal}
\usepackage[dvips]{graphicx}
\usepackage{amsfonts}%
\usepackage{amsaddr}
\usepackage{graphicx}
\usepackage{graphics}

\newtheorem{theorem}{Theorem}
\newtheorem{example}{Example}
\newtheorem{definition}{Definition}

\newtheorem{proposition}{Proposition}

\newtheorem{lemma}{Lemma}

\newcommand{\R}{\mathbb{R}}
\newcommand{\C}{\mathbb{C}}

\newcommand{\1}{\mathbf{1}}
\newcommand{\mL}{\mathcal L}
\newcommand{\mA}{\mathcal A}
\newcommand{\mS}{\mathcal S}
\newcommand{\mB}{\mathcal B}

\begin{document}

\title{Improving the convergence of reversible samplers}

\author{Luc Rey-Bellet}
\address{Department of  Mathematics and Statistics\\
University of Massachusetts Amherst, Amherst, MA, 01003}
\email{luc@math.umass.edu}

\author{Konstantinos Spiliopoulos}
\address{Department of  Mathematics and Statistics\\
Boston University, Boston, MA, 02215}
\email{kspiliop@math.bu.edu}
\thanks{
K.S. was partially supported by the National Science Foundation (NSF)
DMS 1312124 and during revisions of this article by NSF CAREER award DMS 1550918. LRB was partially supported by the NSF DMS 1109316.
}
\date{\today}

\maketitle

\date{\today}

\begin{abstract} In Monte-Carlo methods the Markov processes used to sample a given target distribution
usually satisfy detailed balance, i.e.  they are  time-reversible. However, relatively recent results have
demonstrated that appropriate  reversible and irreversible perturbations can accelerate convergence to
equilibrium. In this paper we present some general design principles which apply to general Markov
processes.  Working with the generator of Markov processes, we prove that for some of the most commonly
used performance criteria,  i.e., spectral gap, asymptotic variance and large deviation functionals,
sampling is improved for  appropriate reversible and irreversible perturbations of some initially
given reversible sampler. Moreover we provide specific constructions for such reversible and irreversible
perturbations for various commonly used  Markov processes, such as Markov chains and diffusions.
In the case of diffusions, we make the discussion more specific using the large deviations rate function as a
measure of performance.
\end{abstract}

\textbf{Keywords:} Markov processes, Monte Carlo Sampling, Irreversibility, Detailed balance, Langevin Sampling, Large deviations, Asymptotic Variance

\section{Introduction}\label{S:Introduction}
In this paper we study the problem of sampling from a probability distribution $\pi(dx)$ which, typically,  is
known only up  to a normalizing constant. Sampling directly from $\pi(dx)$ is often infeasible and thus one
needs to rely on approximations. For example if $f:E\mapsto \R$ is a given observable on the state space $E$
and if one is interested in computing $\bar{f}=\int_{E}f(x)\pi(dx)$ one  constructs a positive recurrent  Markov process $X(t)$ which has $\pi$ as its invariant distribution.  Using the ergodic theorem
\[
\lim_{t\rightarrow\infty}\frac{1}{t} \int_{0}^{t}f(X(s))ds=\bar{f}, \text{a.s. for }f\in L^{1}(\pi).
\]
one can approximate $\bar{f}$ by  $f_{t}=\frac{1}{t} \int_{0}^{t}f(X(s))ds$ for sufficiently large $t$.
Clearly  the degree to which such an approximation is efficient depends on the ergodic properties
of the Markov process $X(t)$ and on the criterion used for comparison.

Many different reversible and irreversible algorithms have been proposed in the literature
dealing with both discrete and continuous (time or space) Markov chains as well
as diffusion processes. For Markov chains we refer the reader to \cite{AtheyaDossSethuraman1996, Bierkens2015,ChenHwang2013, DiaconisHolmesNeal2010, DiaconisMiclo2013, FrigessiHwangYounes1992,
FrigessiHwangSheuStefano1993,LeisenMira2008, Mira2001a, Mira2001b, MiraGeyer2000, Neal2004,
Peskun1973, SunGomezSchmidhuber2010,Tierney1998}
and for diffusion processes we refer the reader to
\cite{DuncanLelievrePavliotis2015,HwangMaSheu1993,HwangMaSheu2005,HwangNormandSheu2014,LelievreNierPavliotis2012,ReyBelletSpiliopoulos2014,ReyBelletSpiliopoulos2014b}. In most of these works, a
reversible Markov chain or diffusion,
$X_{0}(t)$, that has $\pi$ as its invariant distribution is taken as a reference process and then different
reversible or irreversible perturbations are explored which maintain the
same invariant measure and lead to improved sampling properties. The criteria that are mostly used
for comparison purposes are the spectral gap of the generator of the process and the asymptotic variance of
the estimator. Relatively recently, the large deviations rate function has been proposed  in  \cite{DupuisDoll1}
and used in \cite{DupuisDoll1,ReyBelletSpiliopoulos2014,ReyBelletSpiliopoulos2014b} as an alternative
criterium for  convergence and its connection to the asymptotic variance  have been explored.

The contribution of this paper is threefold. Firstly, we unify and extend existing results in the literature
demonstrating  that
there is a general underlying principle that applies to virtually all appropriate modifications of given reversible
Markov processes, without having to restrict attention to continuous or discrete Markov jump processes
or diffusion processes. Working directly  with the infinitesimal generator of the Markov process, we prove that, under suitable conditions,
reversible perturbations by negative definite generators as well as irreversible perturbations that maintain the
invariant measure result in faster convergence to equilibrium. We prove that this is true based on all commonly
used criteria of convergence; spectral gap, asymptotic variance and large deviations. We remark however that in this paper we restrict attention to additive perturbations of a generator by assymetric and anti-symmetric operators and we do not discuss techniques such as importance sampling, splitting, stratification and sequential sampling.

Secondly, we discuss specific constructions of such reversible and irreversible perturbations. We focus on
continuous time Markov chains, Markov jump processes and diffusion processes.
Some of these specific constructions are known in the literature,
such as the Peskun and Tierney constructions, \cite{Peskun1973, Tierney1998}, whereas others are novel,
such as the reversible perturbation of Markov jump processes, Example \ref{Ex:ReversibleJumpProcess}, the
reversible perturbation of diffusion processes, Example \ref{Ex:ReversibleDiffusion}
and the irreversible perturbations of generic Markov chains, Example \ref{IrreversibleMC}.

Thirdly, following \cite{DupuisDoll1,ReyBelletSpiliopoulos2014,ReyBelletSpiliopoulos2014b} we argue that large
deviations is a natural criterion for comparison for ergodic averages since it looks directly at the
actual numerical approximation, which is the ergodic average.
It has the advantage that in many cases it allows explicit computations which helps when comparing different
algorithms. Also, it is directly connected to the asymptotic variance, as the second order Taylor expansion
of the large deviation rate function around the
limit $\bar{f}$ is inversely proportional to the asymptotic variance. We focus mainly on diffusion processes
where the known form of
the rate function allows to do comparisons among specific algorithms.

The rest of the paper is organized as follows. In Section \ref{S:Examples}, we discuss the type of allowed perturbations and provide specific examples of such possible perturbations for
cases of interest, such as Markov chains, Markov jump processes and diffusion processes. In Section \ref{S:MainLemmas} we prove that the previously mentioned perturbations
 lead to improvement of sampling based on the behavior of the spectral gap, the asymptotic variance and of the large deviations rate function.
In Section \ref{S:LDPanalysisMarkovChain} we study some consequences of our theory for irreversible perturbations of Markov Chains.
Moreover, using large deviations, in Section \ref{S:LDPanalysisDiffusions}, we study the effect of appropriate negative reversible and irreversible perturbations of reference reversible diffusion processes on the rate of convergence to equilibrium.

\section{Perturbations of reversible Markov processes}\label{S:Examples}
Let us consider an ergodic time reversible continuous-time Markov process $X_0(t)$ on the state space $K$
with invariant measure $\pi$.  Let $L^2_{\R}(\pi)$ be the real Hilbert space with scalar product
$\langle f, g \rangle = \int f(x)  g(x) \pi(dx)$.  We denote by $T_{t}^{0}$ the corresponding strongly continuous
Markov semigroup as an operator
on $L^2_{\R}(\pi)$  with infinitesimal  generator $\mL_0$ with domain $D(\mL_0)$.   When discussing spectral properties we
will need also to consider $L^2_{\C}(\pi)$, the complex Hilbert space with scalar product
$\langle f, g \rangle = \int f(x) \bar{g}(x) \pi(dx)$.  All operators involved here are real operators (they map real functions into real
functions) and so they extend trivially to $L^2_{\C}(\pi)$. Abusing notation slightly, we will use the same notation for the operators
acting on the real or complex Hilbert spaces.

Since $X_0(t)$ is time-reversible, $T_{t}^{0}$ and its generator $\mL_0$ are self-adjoint: that is we have
\begin{equation}
\langle f, \mL_0 g \rangle  =  \langle \mL_0 f,  g \rangle
\end{equation}
for all $f,g  \in D(\mL_0)$.

We shall also assume the semigroup $T_{t}^{0}$ has a spectral gap in $L^2_{\R}(\pi)$, i.e., there exists
$\lambda_0 < 0$ such that
\begin{equation}
\sigma(\mL_0) \setminus \{ 0\} \,\subset\, (-\infty,  \lambda_0]
\end{equation}
where $\sigma(\mL_0)$ denotes the spectrum.  Note that $\mL_0$ is then  negative definite, i.e. we have
\begin{equation}
\langle f,  \mL_0 f \rangle  \le 0
\end{equation}
for all $f$ in $L_{\C}^2(\pi)$.

We may think of $X_0(t)$ as a reference process and we now introduce two types of ``perturbations"  $X(t)$ of
the processes $X_0(t)$ where we require that $X(t)$ has the same invariant measure $\pi$. In the first type of perturbation $X(t)$  maintains the reversibility property, even though the dynamics have changed,   whereas in the second type of perturbation $X(t)$ is no longer reversible.

We describe then simple criteria which ensure that the process $X(t)$ converges faster to
equilibrium than $X_0(t)$ in various senses. In Section \ref{S:MainLemmas} we prove that these perturbations lead to faster convergence to equilibrium. Essentially, reversible perturbations by negative definite operators and irreversible perturbations, that is perturbations by adding an antisymmetric part to the generator,
lead to improvement in sampling.

In Subsection \ref{SS:Reversible} we look at reversible perturbations and Examples \ref{Ex:ReversibleMC}-\ref{Ex:ReversibleDiffusion} present some concrete constructions. Then,
in Subsection \ref{SS:Irreversible} we look at irreversible perturbations and
 Examples
\ref{IrreversibleMC}-\ref{Ex:IrreversibleDiffusion} present some related exact constructions.

\bigskip
\subsection{Reversible perturbations}\label{SS:Reversible}  We consider a Markov process with generator $\mL=\mL_0  + \mS$ and we assume that
\begin{enumerate}
\item We have $D(\mL_0) \subset D(\mS)$
and $\mL_0 + \mathcal S$ is the generator of a Markov process with invariant measure $\pi$
\item
For all $f, g \in D(\mS)$ we have
\begin{equation*}
\langle f ,\mS g \rangle \,=\,  \langle \mS f, g \rangle
\end{equation*}
i.e. $\mS$ is self-adjoint.  This implies that both $\mL$ and $\mL_0$ are self-adjoint.
\item  $\mS$ is {\it negative definite} i.e.,
\begin{equation*}
\langle f , \mS f \rangle \le 0
\end{equation*}
for all $f \in D(\mS)$.
\end{enumerate}

Let us see now some specific examples of Markov processes where the perturbation $\mS$ can be constructed.
\begin{example}{\rm
{\bf (Markov chains on finite discrete spate space, Peskun condition).} \label{Ex:ReversibleMC} Consider a continuous-time
Markov chain on a discrete finite state space $K=\{1, \cdots, N\}$  with transition probability kernel $k_0(i,j)$.  The perturbation
$\mS$ is such that  for all pairs $i,j$ in $\mS$ with $i\not =j$ we have for the new transition probability kernel
\begin{equation}
k(i,j)\ge k_0(i,j)\,.
\end{equation}

This means that the jump rate for $X(t)$ is bigger than the jump rate for $X_0(t)$ for any part of the state.  Intuitively it means that
the Markov chains spends less time in its current state and this should speed up the convergence.  This condition was introduced
in \cite{Peskun1973} for discrete-time Markov chain and in \cite{LeisenMira2008} for  continuous time and shown to lead to decreased variance.

Let us discuss now how one can construct $\mS$ concretely. Since we require both $X$ and $X_0$ to have the same invariant measure clearly $\mS(i,j)$ and $\mS(j,i)$ are not independent.
But, for two different pairs of states $(i,j)$ and $(i',j')$ we can choose $\mS(i,j)$ and $\mS(i', j')$ completely independently. Thus, we can
write
\begin{equation*}
\mS = \sum_{1 \le i < j \le N}  \mS^{(i,j)}
\end{equation*}
where $\mS^{(i,j)}$ has the form
\begin{equation*}
 \left( \begin{array}{ccccc}
               &                  &               &               &     \\
               &-\epsilon&  \cdots   & \epsilon &   \\
               &   \vdots  &      \vdots & \vdots   &    \\
               &   \delta  &      \cdots & -\delta  &    \\
              &                  &                &               &     \\
\end{array}\right)
\end{equation*}
and $\delta$ and $\epsilon$ are non-negative and satisfy
\begin{equation}\label{Scond}
\pi(i) \epsilon = \pi(j) \delta.
\end{equation}

The entries in $\mS^{(i,j)}$ are all zeros apart from the $i,j$ rows and columns  where the indicated values are taken. Condition \eqref{Scond} ensures that $\mS$ is self-adjoint since
for any $f=(f(1), \cdots f(N))^T$ and  $g=(g(1), \cdots g(N))^T$ in $L^2_\R(\pi)$ we have
\begin{eqnarray*}
<f, \mS^{(i,j)} g> \,&=&\,  \pi(i) f(i) ( - \epsilon g(i) + \epsilon g(j))  -  \pi(j) f(j) ( - \delta g(i) + \delta g(j))) \nonumber \\
\,&=&\,  - \epsilon \pi(i) (f(i) - f(j))(g((i)-g(j))
\end{eqnarray*}
which is obviously symmetric in $f$ and $g$.  So, $\mS$ is self-adjoint on $L^2_\R(\pi)$ and then also on $L^2_\C(\pi)$.
This  ensures that both processes $X$and $X_0$ satisfies detailed balance with
respect to $\pi$.  In addition we have for any $f=(f(1), \cdots f(N))^T \in L^2_\R(\pi)$
\begin{eqnarray*}
<f, \mS^{(i,j)} f> \,&=&\,  - \epsilon \pi(i) (f(i) - f(j))^2 \le 0
\end{eqnarray*}
hence $\mS$ is negative definite since $\epsilon$ and $\delta$ are non-negative.

}
\end{example}

\begin{example}{\rm
{\bf (General jump process).}\label{Ex:ReversibleJumpProcess} Let us consider a continuous time Markov jump process that has bounded infinitesimal generator taking values on a state space $K$. The general form of its generator takes the form
\begin{equation*}
\mathcal{L}_{0}f(x)=\lambda(x)\int_{K}\left(f(y)-f(x)\right)\alpha(x,dy)
\end{equation*}
where $\lambda$ is a nonnegative bounded intensity function on $K$ and $\alpha(x,\Gamma)$ is a transition kernel on $K\times\mathcal{B}(K)$.

The construction of such a jump process can be done as follows. Consider a Markov chain $X_{n}$ on $K$ with transition probability $\alpha(x,\Gamma)$ and letting
$\tau_{1}, \tau_{2},\cdots$ be independent (between them and from $X_{n}$ for every $n\in\mathbb{N}$ as well) and exponentially distributed random variables with mean $1$,
define $s_{\kappa}$ via the relation $\lambda(X_{\kappa-1})s_{\kappa}=\tau_{\kappa}$. Then, the Markov jump process with generator $\mathcal{L}_{0}$ is given by
\begin{equation}
X_{0}(t)=X_{n},\quad \text{for}\quad \sum_{\kappa=1}^{n}s_{\kappa}\leq t<\sum_{\kappa=1}^{n+1}s_{\kappa}.\label{Eq:JumpMarkovProcess}
\end{equation}

Let us assume that there exist $0<\lambda_{1}\leq \lambda_{2}<\infty$ such that for all $x$, $\lambda_{1}\leq \lambda(x)\leq \lambda_{2}$.
Then, under  appropriate conditions on the transition kernel $\alpha$, see for example Section 2 of \cite{DupuisYufei2013}, we have that $X_{0}(t)$ is an ergodic process. In particular, $\alpha$ has then an invariant distribution denoted by $\tilde{\pi}$ and the boundedness of $\lambda(\cdot)$ allows us to define
\[
\pi(E)=\frac{\int_{E}\frac{1}{\lambda(x)}\tilde{\pi}(dx)}{\int_{K}\frac{1}{\lambda(x)}\tilde{\pi}(dx)}
\]
which can be shown to be the unique invariant distribution of $X_{0}(t)$. Now, we also make the assumption that the process $X_{0}$ is reversible, which means that for all $x,y\in K$
\begin{equation}
\lambda(x)\alpha(x,dy)\pi(dx)=\lambda(y)\alpha(y,dx)\pi(dy).\label{Eq:DetailedBalanceJumpProcess}
\end{equation}

There are many different reversible perturbations that one can imagine. Perhaps the simplest one is to use the Peskun-Tierney \cite{Peskun1973, Tierney1998} construction on the Markov chain $X_{n}$ that
is used to define
the jump Markov process $X_{0}(t)$ via (\ref{Eq:JumpMarkovProcess}), as follows. Notice that we can write
\begin{equation*}
\mathcal{L}_{0}f(x)=\int_{K}f(y)A(x,dy)
\end{equation*}
where setting $\|\lambda\|=\sup_{x\in K}\lambda(x)>0$, we have defined
\[
A(x,dy)=\nu\|\lambda\|\left(\hat{\alpha}(x,dy)-\delta_{x}(dy)\right),\quad\text{ and }\quad \hat{\alpha}(x,dy)= \frac{\lambda(x)}{\|\lambda\|}\alpha(x,dy)+\left(1-\frac{\lambda(x)}{\|\lambda\|}\right)\delta_{x}(dy).
\]

Let us now consider a transition probability operator $\beta(x,dy)$  such that
for almost every $x\in K$, $\beta(x,\Gamma\setminus\{x\})\geq \alpha(x,\Gamma\setminus\{x\}) $ for every $\Gamma\in\mathcal{B}(K)$. Assume that $\beta(x,dy)$ is such that
(\ref{Eq:DetailedBalanceJumpProcess}) holds and consider the jump Markov process with generator
\begin{equation*}
\mathcal{L}f(x)=\int_{K}f(y)B(x,dy)
\end{equation*}
where $B(x,dy)$ is as $A(x,dy)$ with $\beta(x,dy)$ in place of $\alpha(x,dy)$.

Now, we are in the set-up of \cite{Tierney1998}. It is easy to see that for almost every $x\in K$ we have that
$B(x,\Gamma\setminus\{x\})\geq A(x,\Gamma\setminus\{x\}) $ for every $\Gamma\in\mathcal{B}(K)$.  Lemma 3 in \cite{Tierney1998} guarantees that the operator $\mL-\mL_{0}=\mS$ is negative operator in $\mathcal{L}^{2}(\pi)$. In Section \ref{S:MainLemmas} we prove that if one uses the jump Markov process with generator $\mathcal{L}f(x)$ instead of $\mathcal{L}_{0}f(x)$, then the sampling properties of the algorithm are better.
}
\end{example}

\begin{example}{\rm
{\bf (Diffusions with multiplicative noise).}\label{Ex:ReversibleDiffusion} Let $T>0$ and consider the diffusion on  $\R^d$
\begin{equation}\label{GeometricLangevin}
dX(t) \,=\,  \left[ - \Sigma( X(t)) \nabla U( X(t))  + T\nabla \cdot \Sigma (X(t))\right] dt  + \sqrt{2 T} \sigma( X(t)) dB(t)
\end{equation}
where $B$ is a $d$-dimensional Brownian motion,  $\sigma : \R^d \to  \R^{d\times d}$,
$\Sigma(x) = \sigma(x) \sigma(x)^T$ and $\nabla \cdot \Sigma$ denotes the vector field with components
$\sum_{j} \partial_{x_j} \Sigma_{i,j}(x)$.

If $\sigma$ is the identity matrix the equation reduces to the standard overdamped Langevin equation
\begin{equation}\label{Langevin}
dX_0(t) \,=\,  -   \nabla U(X_0(t)) dt  + \sqrt{2 T} dB(t)
\end{equation}
which we take as our reference process.  The generator $\mL$ is given by
\begin{equation*}
\mL \,=\,  T \nabla \cdot \Sigma \nabla - \Sigma \nabla U \cdot \nabla
\end{equation*}
In any case under suitable regularity and growth conditions  on $U$ and $\sigma$ the process $X(t)$ is ergodic
and the measure
\begin{equation*}
\pi(dx) = Z^{-1} e^{ - U(x)/T} dx, \qquad \text{with } Z=\int e^{ - U(x)/T} dx
\end{equation*}
is invariant for \eqref{GeometricLangevin} and $X(t)$ is reversible.  We have for $f,g \in D(\mL)$
\begin{equation*}
\langle f, \mL g \rangle \,=\,  - T\int  \nabla f(x) \cdot \Sigma(x) \nabla g(x) \pi(dx)\,.
\end{equation*}

Given that for $f,g \in D(\mL_{0})$ the reference generator $\mL_{0}$ satisfies
\begin{equation*}
\langle f, \mL_{0} g \rangle \,=\,  - T\int  \nabla f(x) \cdot \nabla g(x) \pi(dx)\,.
\end{equation*}
we get that  the perturbation $\mS$ has the form
\begin{equation*}
\langle f, \mS g \rangle \,=\,  - \int  \nabla f(x) \cdot (\Sigma(x)- \1) \nabla g(x) \pi(dx) \,.
\end{equation*}
A convenient choice is to take
\begin{equation*}
\sigma(x) \,=\,  \1 + A(x)
\end{equation*}
where we choose $A$ such that $A+A^T$ is nonnegative definite. Then we have
\begin{equation*}
\Sigma= \1 + A + A^T + AA^T
\end{equation*}
and $\mS$ is negative definite. In the context of Hamiltonian Monte Carlo, the authors in \cite{GirolamiCarderhead2011} suggest using (\ref{GeometricLangevin})
with a special choice for the  matrix $\Sigma(x)$. % that changes the local curvature of the Riemannian manifold.
In Section \ref{S:MainLemmas} we prove that regular enough choices of $\Sigma(x)$ such that $\Sigma(x)-I$ is
positive definite, lead to improved sampling.
The degree of improvement depends of course on the choice of $\Sigma(x)$.
}
\end{example}

\bigskip
\subsection{ Irreversible perturbations}\label{SS:Irreversible}  We consider a Markov process with generator $\mL=\mL_0  + \mA$ and we assume that
\begin{enumerate}
\item We have
$D(\mathcal L) \subset D(\mA)$
and $\mathcal L + \mA$ is the generator of a Markov process with invariant measure $\pi$.
\item
For all $f, g \in D(\mA)$ we have
\begin{equation*}
\langle f , \mA g \rangle \,=\, -  \langle \mA f, g \rangle
\end{equation*}
i.e. $\mA$ is antiself-adjoint.   Clearly this implies that
\begin{equation*}
\langle f , \mA f \rangle = 0
\end{equation*}
for all (real-valued) $f \in D(\mA)$.
\end{enumerate}

\begin{example}
{\rm
{\bf (Markov chains on discrete state space).}\label{IrreversibleMC}  Consider a continuous-time Markov chain on a discrete finite state space $K=\{1,
\cdots, N\}$  with generator $\mL_0(i,j)$.  Comparisons of reversible and non-reversible Markov chains can be found in \cite{Bierkens2015,ChenHwang2013,DiaconisHolmesNeal2010,MiraGeyer2000,Neal2004}. Here we present a simple irreversible perturbation of a reversible Markov chain that leads to acceleration of convergence.

To construct a non-reversible perturbation consider a matrix $\Gamma(i,j)$ with
\begin{equation*}
\Gamma(i,j)=- \Gamma(j,i)\,, \quad  \sum_{j} \Gamma(i,j) =0
\end{equation*}
that is $\Gamma$ is antisymmetric and the sum of its rows (and columns) is $0$.  Then set
\begin{equation*}
\mA(i,j) = \frac{1}{\pi(i)} \Gamma(i,j)  \,.
\end{equation*}

We have then
\begin{equation*}
\sum_{i} \pi(i) \mA(i,j) \,=\, \sum_{i} \Gamma(i,j) =0
\end{equation*}
and this ensures that $\pi$ is the invariant measure for the generator $\mL=\mL_0 + \mA$.  Of course one needs to choose the entries in
$\Gamma$  sufficiently small such that the entries in $\mL$ are nonnegative. Moreover, the adjoint of $\mA$ on
$L^2_\R(\pi)$ is the matrix with entries $\mA^*(j,i)= \pi(i) \mA(i,j) \pi(j)^{-1}$ so that
\begin{equation*}
\mA^*(j,i)= \pi(i) \mA(i,j) \pi(j)^{-1} \,=\, \Gamma(i,j) \pi(j)^{-1} \,=\, - \pi(j)^{-1} \Gamma(j,i) \,=\, - \mA(j,i)
\end{equation*}
and thus $\mA$ is anti-selfadjoint.

To build concrete examples of such Markov chains we will express the perturbations in terms of
{\it cycles}.  To the reversible Markov chain  with generator $\mathcal{L}_0$ we associate,
in the usual manner, the undirected graph $G = (V,E)$ where the set of vertices $V=K$
and where the edge $(i,j)$ is in $E$ if $\mathcal{L}_0(i,j)>0$.  Now we can construct
irreversible perturbations in terms of {\it cycles} in the graph $G$.  If we assume for example
that the graph contains a cycle of length $3$, say,  through the states $i, j, k$ in $S$, then
we can take a perturbation $\Gamma^{(i,j,k)}$ to be of the form
\begin{equation*}
\Gamma^{(i,j,k)} \propto \left( \begin{array}{ccccccc}
               &                  &               &               &              &       &\\
               & 0              &  \cdots    & 1           & \cdots  & -1   &   \\
               &   \vdots    &      \vdots & \vdots  &  \vdots  &  \vdots    &\\
               &   -1           &      \cdots & 0        &  \cdots    & 1    &    \\
              &    \vdots    &    \vdots   &\vdots  & \vdots    & \vdots  & \\
               &   1           &      \cdots & -1        &  \cdots    & 0    &    \\
               &                  &               &               &              &       &
\end{array}\right)
\end{equation*}
where the $\cdots$ and $\vdots$ represent zero's and the elements shown are the $i,j,k$ rows and
columns.  The proportionality constant must be chosen small enough so that  the transition rates are
non-negative.   Then $\mA=\mA^{(i,j,k)}$ has the form
\begin{equation*}
\mA^{(i,j,k)}  \propto \left( \begin{array}{ccccccc}
               &                  &               &               &              &       &\\
               & 0              &  \cdots    & \frac{1}{\pi(i)}           & \cdots  & -\frac{1}{\pi(i)}   &   \\
               &   \vdots    &      \vdots & \vdots  &  \vdots  &  \vdots    &\\
               &   -\frac{1}{\pi(j)}           &      \cdots & 0        &  \cdots    & \frac{1}{\pi(j)}     &    \\
              &    \vdots    &    \vdots   &\vdots  & \vdots    & \vdots  & \\
               &   \frac{1}{\pi(k)}          &      \cdots & -\frac{1}{\pi(k)}         &  \cdots    & 0    &    \\
               &                  &               &               &              &       &
\end{array}\right)
\end{equation*}
To show how it can be achieved in a concrete Monte-Carlo situation consider the invariant measure
$\pi(i)=Z^{-1} e^{-H(i)}$:  any generator of the form
\begin{equation*}
\mathcal{L}(i,j) =  \frac{1}{\pi(i)} c(i,j)    \textrm{ with } c(i,j)=c(j,i)
\end{equation*}
is reversible with invariant measure $\pi(i)$.  Standard choices are the Glauber dynamics $\mathcal{L}_G$ and
the Metropolis dynamics $\mathcal{L}_M$ with
\begin{equation*}
\mathcal{L}_G(i,j) =  \frac{e^{H(i)}}{e^{H(i)} + e^{H(j)}} \,,  \quad \quad  \mathcal{L}_M(i,j) =  {e^{H(i)}} \min\left\{ e^{-H(i)}, e^{-H(j)} \right\}
\end{equation*}
which both do not depend on the, possibly hard to compute, normalization constant $Z$.

One easily constructs adapted irreversible perturbations which in turn do not depend on the normalization constant by choosing for example  for the Glauber dynamics
\begin{equation*}
\mA_G^{(i,j,k)} = \epsilon \left( \begin{array}{ccccccc}
               &                  &               &               &              &       &\\
               & 0              &  \cdots    & \frac{e^{H(i)}}{e^{H(i)} + e^{H(j)} + e^{H(k)}} & \cdots  & -\frac{e^{H(i)}}{e^{H(i)} + e^{H(j)} + e^{H(k)} }   &   \\
               &   \vdots    &      \vdots & \vdots  &  \vdots  &  \vdots    &\\
               &   - \frac{e^{H(j)}}{e^{H(i)} + e^{H(j)} + e^{H(k)}}           &      \cdots & 0        &  \cdots    & \frac{e^{-H(j)}}{e^{H(i)} + e^{H(j)} + e^{H(k)}}      &    \\
              &    \vdots    &    \vdots   &\vdots  & \vdots    & \vdots  & \\
               &   \frac{e^{H(k)}}{e^{H(i)} + e^{H(j)} + e^{H(k)}}          &      \cdots & - \frac{e^{H(k)}}{e^{H(i)} + e^{H(j)} + e^{H(k)}}        &  \cdots    & 0    &    \\
               &                  &               &               &              &       &
\end{array}\right)
\end{equation*}
and with the abbreviation $m(i,j,k) = \min \left\{ e^{-H(i)}, e^{-H(j)} , e^{-H(k)} \right\}$ for the Metropolis dynamics
\begin{equation*}
\mA_G^{(i,j,k)} = \epsilon \left( \begin{array}{ccccccc}
               &                  &               &               &              &       &\\
               & 0              &  \cdots    & {e^{H(i)}} m(i,j,k) & \cdots  & -{e^{H(i)}}  m(i,j,k)  &   \\
               &   \vdots    &      \vdots & \vdots  &  \vdots  &  \vdots    &\\
               &   - {e^{H(j)}} m(i,j,k)         &      \cdots & 0        &  \cdots    & {e^{H(j)}} m(i,j,k)   &    \\
              &    \vdots    &    \vdots   &\vdots  & \vdots    & \vdots  & \\
               &   {e^{H(k)}} m(i,j,k)       &      \cdots & - {e^{H(k)}}  m(i,j,k)       &  \cdots    & 0    &    \\
               &                  &               &               &              &       &
\end{array}\right)
\end{equation*}

Note that if we add perturbation for exactly one cycle we should take the coefficient $\epsilon$ sufficiently small so that the rates are non-negative. In general if we add perturbations for many,  or all, cycles the sum of the coefficients
of all cycles containing any given state should not add up to more than $1$ to ensure that the rates are non-
negative.  Also this can be generalized easily to cycles of arbitrary length and the details are left to
the reader.
}
\end{example}

\begin{example}{\rm
{\bf (Diffusions in $\R^{d}$ or on compact manifolds)}\label{Ex:IrreversibleDiffusion}
Consider the SDE
\begin{equation*}
dX(t) = - \nabla U(X(t)) + C(X(t))  + \sqrt{2T} dB(t)
\end{equation*}
in  $\mathbb{R}^d$ or in a compact manifold $E$. Assume that the growth properties of $U(x)$ and $C(x)$ are such that the SDE has a unique, non-explosive strong solution with a unique invariant measure. For $C(x)=0$ the process is reversible with invariant measure
\begin{equation*}
\pi(dx) \,=\, Z^{-1} e^{ -U(x)/T } dx,\qquad \text{where }Z=\int e^{ -U(x)/T } dx
\end{equation*}
and generator
\begin{equation*}
\mL_0 \,=\, T \Delta - \nabla U \cdot \nabla
\end{equation*}
and if we pick $C$ such that ${\rm div}(C(x)e^{- U(x)/T})=0$, then the invariant measure $\pi$ is maintained. Notice that
\begin{equation*}
\mL=\mL_{0}+\mA, \qquad\text{where } \mA \,=\, C \cdot \nabla
\end{equation*}
and then $A$ is antisymmetric in $L^2(\pi)$. The relation ${\rm div}(C(x)e^{- U(x)/T})=0$ is equivalent to ${\rm div}(C(x))=T^{-1} C(x)\cdot \nabla U(x)$, which is
implied if we assume that $C$ is divergence free and orthogonal to $\nabla U$, i.e., ${\rm div}(C(x))=0$ and $C(x)\cdot\nabla U(x)=0$. The results of
\cite{HwangMaSheu2005}, for spectral gap, and of \cite{ReyBelletSpiliopoulos2014,ReyBelletSpiliopoulos2014b} for
 the asymptotic variance and large deviations rate function, as well as  \cite{HwangNormandSheu2014,DuncanLelievrePavliotis2015}  for the asymptotic variance,
 show that the convergence improves when the irreversible perturbation $A$ is introduced.
}
\end{example}

\section{General theory on improvement of convergence properties}\label{S:MainLemmas}

We prove simple lemmas showing that perturbations of reversible and irreversible types ameliorate the
convergence properties of the algorithms for all commonly used criteria of convergence: spectral gap, asymptotic variance and large deviations rate function.

In specialized settings, versions of Lemmas \ref{L:SpectralGap} and \ref{L:AsymptoticVariance} below have appeared in the literature before, see
\cite{ChenHwang2013, DiaconisHolmesNeal2010, DiaconisMiclo2013, DuncanLelievrePavliotis2015, FrigessiHwangYounes1992, FrigessiHwangSheuStefano1993,HwangMaSheu1993,HwangMaSheu2005,
LeisenMira2008, Mira2001a, Mira2001b, MiraGeyer2000, Neal2004, Peskun1973, Tierney1998}. The novelty of  Lemmas \ref{L:SpectralGap} and \ref{L:AsymptoticVariance} is that working solely with the generator, we can prove in great generality (i.e., without restricting to specialized settings)  that perturbations of general reversible Markov processes
by negative reversible or irreversible generators decrease both spectral gap and asymptotic variance of the estimator.

Lemmas \ref{L:LargeDeviationsMeasures} and \ref{L:LargeDeviationsObservable}
state that the large deviations behavior is also improved. This is because  the tail probability of the estimator being away from the true value decreases faster, yielding faster convergence to equilibrium. This was studied in detail in \cite{ReyBelletSpiliopoulos2014} for the specific case of irreversible perturbations of reversible diffusion processes, i.e., in the setup of Example \ref{Ex:IrreversibleDiffusion}. Here we prove that this is true for Markov processes in general, without having to restrict attention to diffusion processes. We work directly with the generator of a given Markov process.

In the sequel we consider a generator of the type
\begin{equation*}
\mL= \mL_0 +  \mS + \mA
\end{equation*}
where $\mS$ is a reversible perturbation and $\mA$ is an irreversible one.

It will be useful to introduce the space
\begin{equation*}
H_\R^0 \equiv \{ f \in L^2_\R(\pi) \,;\,  \int f d \pi  = 0 \}
\end{equation*}
which is the subspace of $L^2_\R(\pi)$ which is orthogonal to the eigenspace corresponding to the eigenvalue $0$
of $\mathcal L$.   In particular $H_\R^0$ is invariant under the semigroup $T^t$.

In Lemma \ref{L:SpectralGap} we prove that the spectral gap associated to a Markov process with generator
$\mL$ is smaller than the spectral gap associated to a Markov process with generator $\mL_0$. In Lemma \ref{L:AsymptoticVariance} we prove that the asymptotic variance of the empirical average of a Markov process improves (i.e., decreases) under the reversible and irreversible perturbation. Lastly, in Lemmas \ref{L:LargeDeviationsMeasures} and \ref{L:LargeDeviationsObservable} we prove that a similar behavior is true from the eyes of the large deviations rate function for the empirical average.

\subsection{Spectral gap.}
Our first result is about the spectral gap which is defined in the general (non-reversible) case as
\begin{equation*}
\lambda \,=\,  \sup \{ {\rm Re}(z)\,;  z \in \sigma(\mL), z\not =0\} \,.
\end{equation*}

By the Hille-Philips theorem, see Section 12.3 \cite{HillePhillips}, the existence of a spectral gap  (i.e.  $\lambda <0$) implies a bound
\begin{equation*}
\| T_t f - \int f d\pi  \| \,\le \,  C e^{\lambda t} \| f - \int f d\pi \|
\end{equation*}
for all $f \in L^2_\R(\pi)$. Here $\|\cdot\|$ is the $L^2_\R(\pi)$ norm. Note that in the reversible case, i.e. when $T_t=T_t^0$ is associated with $\mL_{0}$, the spectral theorem implies that the constant $C$ is equal to $1$.

\begin{lemma}{\bf [Spectral gap].}\label{L:SpectralGap} The spectral gap $\lambda$ of the generator of semigroup with generator $\mL= \mL_0 + \mS + \mA
$  is smaller than the spectral gap $\lambda_0$ of $\mL_0$.
\end{lemma}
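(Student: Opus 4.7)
The natural route is to control the spectrum of $\mathcal{L} = \mathcal{L}_0 + \mathcal{S} + \mathcal{A}$ through its numerical range, using the symmetry properties of the three summands in a single calculation. First I would verify that $H_{\R}^0$ (and its complexification $H_\C^0$) is invariant under $\mathcal{L}$: invariance of $\pi$ gives $\mathcal{L}^* \mathbf{1} = 0$, so $\langle \mathcal{L} f , \mathbf{1}\rangle = 0$ whenever $\int f\,d\pi = 0$. Consequently $\sigma(\mathcal{L}) \setminus \{0\}$ coincides with $\sigma(\mathcal{L}|_{H_\C^0})$, so it suffices to bound the real parts of points in the latter spectrum.

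Next I would exploit the algebraic decomposition of the quadratic form. For any $f \in H_\C^0$, self-adjointness of $\mathcal{L}_0$ and $\mathcal{S}$ makes $\langle f, \mathcal{L}_0 f\rangle$ and $\langle f, \mathcal{S} f\rangle$ real, while antisymmetry of $\mathcal{A}$ forces $\langle f, \mathcal{A} f\rangle$ to be purely imaginary. Hence
\begin{equation*}
\mathrm{Re}\,\langle f, \mathcal{L} f \rangle \;=\; \langle f, \mathcal{L}_0 f \rangle + \langle f, \mathcal{S} f\rangle \;\le\; \lambda_0 \|f\|^2,
\end{equation*}
where the final inequality combines the spectral theorem for the self-adjoint operator $\mathcal{L}_0$ on the invariant subspace $H_\C^0$ (which gives $\langle f, \mathcal{L}_0 f\rangle \le \lambda_0 \|f\|^2$ since $\sigma(\mathcal{L}_0) \setminus \{0\} \subset (-\infty, \lambda_0]$) with the negative definiteness of $\mathcal{S}$. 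This shows the numerical range of $\mathcal{L}|_{H_\C^0}$ lies in the half-plane $\{\mathrm{Re}\,z \le \lambda_0\}$.

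Finally I would transfer the numerical-range estimate to the spectrum. For any $z \in \sigma(\mathcal{L}|_{H_\C^0})$ that lies in the point or approximate point spectrum, a (Weyl) sequence $f_n \in H_\C^0$ with $\|f_n\| = 1$ and $\|(\mathcal{L} - z) f_n\| \to 0$ gives $\mathrm{Re}\,z = \lim \mathrm{Re}\,\langle f_n, \mathcal{L} f_n\rangle \le \lambda_0$. For the residual spectrum of $\mathcal{L}|_{H_\C^0}$, one notes that $\bar z$ is then an eigenvalue of $(\mathcal{L}|_{H_\C^0})^* = \mathcal{L}_0 + \mathcal{S} - \mathcal{A}$, which satisfies the same numerical-range bound on $H_\C^0$ (the sign flip of $\mathcal{A}$ is harmless because its quadratic form is imaginary), yielding $\mathrm{Re}\,z \le \lambda_0$ again. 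Taking the supremum over $\sigma(\mathcal{L}) \setminus \{0\}$ gives $\lambda \le \lambda_0$.

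The main obstacle is exactly the last transfer step: the numerical-range inequality is a statement about the quadratic form on a dense subspace, whereas the spectrum can in principle contain residual points not detected by Weyl sequences. The device of passing to $\mathcal{L}^*$ circumvents this, but requires that $H_\R^0$ be invariant for both $\mathcal{L}$ and $\mathcal{L}^*$ and that $\mathcal{L}|_{H_\C^0}$ be closed; both are standard given that $\mathcal{L}$ generates a strongly continuous semigroup preserving $\pi$. Everything else is the one-line identity separating the symmetric, negative-symmetric, and antisymmetric contributions to $\mathrm{Re}\,\langle f, \mathcal{L} f\rangle$.
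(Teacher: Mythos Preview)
Your proposal is correct and rests on the same core identity as the paper's proof: for $f$ in the domain, the antisymmetry of $\mathcal{A}$ kills its contribution to the real part of $\langle f,\mathcal{L}f\rangle$, and then negative definiteness of $\mathcal{S}$ together with the spectral gap of $\mathcal{L}_0$ give $\mathrm{Re}\,\langle f,\mathcal{L}f\rangle\le\lambda_0\|f\|^2$ on $H^0$. The difference lies only in how this dissipativity estimate is converted into a spectral bound. The paper packages it dynamically: it differentiates $\|T_t f\|^2$ on $H_\R^0$, obtains the differential inequality $\frac{d}{dt}\|T_t f\|^2\le 2\lambda_0\|T_t f\|^2$, integrates to get $\|T_t\|_{H^0}\le e^{\lambda_0 t}$, and then invokes the Hille--Phillips theorem to locate the spectrum in $\{\mathrm{Re}\,z\le\lambda_0\}$. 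You instead stay static, bounding the numerical range and then passing to the spectrum by Weyl sequences for the approximate point spectrum and by duality (applying the same bound to $\mathcal{L}^*=\mathcal{L}_0+\mathcal{S}-\mathcal{A}$) for the residual spectrum.

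Both routes are standard; the paper's is marginally cleaner because the semigroup growth bound gives the half-plane inclusion in one stroke via the Laplace-transform representation of the resolvent, whereas your argument has to treat the residual spectrum separately. Conversely, your formulation makes the role of the numerical range explicit and does not require writing down the semigroup at all. Either way, the substantive content is the single line decomposing $\mathrm{Re}\,\langle f,\mathcal{L}f\rangle$.
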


\proof We will use the fact that the reference operator $\mL_0$ is self-adjoint. Let $f \in D(\mL) \subset H^0_\R(\pi)$. Using that for real-valued $f$, $(f, \mA f) =0$ we then obtain
\begin{equation*}
 \frac{d}{dt} \| T_t f \|^2 \,=\, 2 \langle T_t f, (\mL_0 + \mS + \mA ) T_tf \rangle  \,=\,  2 \langle T_t f, (\mL_0 + \mS) T_tf \rangle   \,\le \,
2 \langle T_t f,  \mL_0 T_tf \rangle \,\le\, - 2 \lambda_0 \|T_t f\|^2 \,.
\end{equation*}
The latter implies that $\|T_t f\| \le e^{- \lambda_{0} t} \|f\|$ for any $f \in H_\R^0$. Notice that the pre-factor turns out to be $C=1$ here as well. But the norm of the real operator $T_t f$ acting $H_\R^0$
is the same as the norm on  $H_\C^0$. Since $\mL+\lambda_{0}$ generates a contraction semigroup, we have that $Re\left(\left<\left(\mL+\lambda_{0}\right)f,f\right>\right)\leq 0$, and so by Hille-Philips theorem we conclude that the spectrum of $T_t$
lies in the half-plane $\{ Re(z) \le - \lambda_0\}$.  \qed

\subsection{Asymptotic variance.}
We next turn to the asymptotic variance.  Let $f \in L^2_\R(\pi)$  be an observable and let $\bar{f} = \int f d\pi$. Note that
$f - \bar{f}  \in H_\R^0$.  We assume that the operators $\mL_0$ and $\mL$  are invertible when restricted to $H_\R^0$. We denote
by $\mL_0^{-1}$ and $\mL^{-1}$ their inverse which are bounded operators acting on $H_\R^0$.

For $f \in L^2$ let $S_t(f) = \int_0^t f(X(t)) \,dt$, then ${\bf E}_{\pi} (S_t(f))=t \bar{f}$ and the asymptotic variance of $S_t(f)/t$ satisfies
\begin{equation*}
\sigma^2(f) \equiv \lim_{t \to \infty} \frac{1}{t} {\rm Var}_{\pi}( S_t(f)/t) \,=\, 2 \int_0^\infty \langle T^t (f- \bar{f}) , (f - \bar{f}) \rangle \, dt \,=\,
\langle (f - \bar{f}) , (- \mL)^{-1} (f - \bar{f}) \rangle
\end{equation*}

Using this we prove in Lemma \ref{L:AsymptoticVariance} that the asymptotic variance never decreases by perturbations of the type $\mS+\mA$. Notice that in the case $\mS=0$ a similar result has been recently obtained in \cite{DuncanLelievrePavliotis2015} using different methods.
\begin{lemma}{\bf [Asymptotic variance].} \label{L:AsymptoticVariance}Let us assume that the operator $(-\mL- \mS)^{-1/2} \mA   (-\mL-\mS)^{-1/2}$ is bounded.
Then for any $f \in L^2_\R(\pi)$ we have
\begin{equation*}
\sigma^2(f) \le \sigma_0^2(f)
\end{equation*}
\end{lemma}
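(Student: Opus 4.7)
The plan is to decompose $-\mL$ into its symmetric and antisymmetric parts, represent the asymptotic variance as a quadratic form involving only the symmetric part, and then bound this quadratic form by the unperturbed one. Set $M = -(\mL_0 + \mS)$; by assumption $\mL_0$ has a spectral gap and $\mS$ is self-adjoint negative definite, so $M$ is self-adjoint and (strictly) positive on $H^0_\R$, hence its square root $M^{1/2}$ and inverse $M^{-1}$ are well-defined on $H^0_\R$. Writing $-\mL = M - \mA$, the boundedness hypothesis is what guarantees that $M^{-1/2}\mA M^{-1/2}$ extends to a bounded operator on $H^0_\R$, so that all of the manipulations below make sense on a suitable dense domain.

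The main computation proceeds as follows. Fix $f\in L_\R^2(\pi)$, set $h = f-\bar f \in H^0_\R$, and let $g = (-\mL)^{-1}h$, so that $Mg - \mA g = h$. Since $g$ is real and $\mA$ is antisymmetric, $\langle g,\mA g\rangle = 0$, and therefore
\begin{equation*}
\sigma^2(f) \,=\, \langle h, g\rangle \,=\, \langle Mg - \mA g, g\rangle \,=\, \langle Mg, g\rangle.
\end{equation*}
Now I compare this with the intermediate quantity $\langle h, M^{-1} h\rangle$. Expanding $h = Mg - \mA g$ and using that $M$ is self-adjoint, that $\mA$ is antisymmetric, and again that $g$ is real, the cross terms $\langle Mg, M^{-1}\mA g\rangle = \langle g,\mA g\rangle$ and $\langle \mA g, g\rangle$ both vanish, leaving
\begin{equation*}
\langle h, M^{-1} h\rangle \,=\, \langle Mg, g\rangle + \langle \mA g, M^{-1}\mA g\rangle \,=\, \sigma^2(f) + \| M^{-1/2} \mA g\|^2 \,\ge\, \sigma^2(f).
\end{equation*}

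The final step is to compare $M^{-1}$ with $(-\mL_0)^{-1}$. Both are bounded self-adjoint positive operators on $H^0_\R$, and by construction $M - (-\mL_0) = -\mS \ge 0$, so $M \ge -\mL_0$ in the sense of quadratic forms. The standard operator-monotonicity of the map $A\mapsto A^{-1}$ on positive operators then gives $M^{-1} \le (-\mL_0)^{-1}$ as quadratic forms on $H^0_\R$, so $\langle h, M^{-1} h\rangle \le \langle h, (-\mL_0)^{-1} h\rangle = \sigma_0^2(f)$. Chaining the two inequalities yields $\sigma^2(f) \le \sigma_0^2(f)$, as desired.

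The main obstacle is the bookkeeping around domains and the passage from real-valued quadratic forms to the needed operator manipulations: one must make sure that $g \in D(\mA)$, that $\mA g \in H^0_\R$ (which follows from $\mA^\ast 1 = 0$, i.e.\ from the invariance of $\pi$ under the perturbation), and that $M^{-1/2}\mA g$ is a legitimate element of $H^0_\R$. This is exactly the role of the assumed boundedness of $(-\mL-\mS)^{-1/2}\mA(-\mL-\mS)^{-1/2}$, which, after identifying the symmetric part of $-\mL$ with $M$, lets us form $\|M^{-1/2}\mA g\|^2$ without domain issues and thereby justify all of the displayed identities. Everything else reduces to the antisymmetry of $\mA$ and the standard operator monotonicity of inversion.
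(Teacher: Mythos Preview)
Your proof is correct and follows the same two-step skeleton as the paper: pass through the intermediate quantity $\langle h, M^{-1}h\rangle$ with $M=-(\mL_0+\mS)$, bounding it below by $\sigma^2(f)$ via the antisymmetry of $\mA$ and above by $\sigma_0^2(f)$ via operator monotonicity of inversion. The only real difference is in how the antisymmetric step is executed. The paper factors $(-\mL)^{-1}=M^{-1/2}(\mathbf{1}+\mB)^{-1}M^{-1/2}$ with $\mB=-M^{-1/2}\mA M^{-1/2}$ anti-self-adjoint, then uses the algebraic identity $(\mathbf{1}+\mB)^{-1}=(\mathbf{1}-\mB)(\mathbf{1}+\mB^*\mB)^{-1}$ to isolate the symmetric part $(\mathbf{1}+\mB^*\mB)^{-1}\le\mathbf{1}$. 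You instead work directly with the Poisson solution $g=(-\mL)^{-1}h$ and expand $\langle h,M^{-1}h\rangle$ to exhibit the explicit nonnegative defect $\|M^{-1/2}\mA g\|^2$. Your route is slightly more elementary, sidestepping the operator identity and the commutation check between $\mB$ and $(\mathbf{1}+\mB^*\mB)^{-1}$; the paper's route in exchange yields the closed-form expression $\sigma^2(f)=\langle M^{-1/2}h,(\mathbf{1}+\mB^*\mB)^{-1}M^{-1/2}h\rangle$, which is a bit more informative as an identity.
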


\proof  The reversible and irreversible perturbations use different arguments so we prove this in two steps.  We first compare
the variance for $\mL_0$ and $\mL_0 + \mS$.   We can restrict ourselves on the subspaces $H_\R^0$ where both operator are
invertible. Since $-\mL_0$  is positive definite it possess a square root and we write
\begin{equation*}
-\mL_0 - \mS \,=\, (-\mL_0)^{1/2} \left( \1 + (-\mL_0)^{-1/2} (-\mS) (-\mL_0)^{-1/2} \right)  (-\mL_0)^{1/2}
\end{equation*}
and thus
\begin{equation*}
(-\mL_0 - \mS)^{-1} \,=\, (-\mL_0)^{-1/2} \left( \1 + (-\mL_0)^{-1/2} (-\mS) (-\mL_0)^{-1/2} \right)^{-1}  (-\mL_0)^{-1/2}
\end{equation*}

By assumption $-S$ is non-negative definite therefore so is $T= (-\mL_0)^{-1/2} (-\mS) (-\mL_0)^{-1/2}$.
If we set   \begin{equation*}
g = (-\mL_0)^{1/2} (f - \bar{f})
\end{equation*}
the statement reduces to proving that for any $g$ we have
\begin{equation*}
\langle g ,  ( \1 + T)^{-1} g \rangle \le \langle g, g \rangle
\end{equation*}
But this follows immediately from the spectral theorem for self-adjoint operator.

To handle the irreversible perturbation let us consider a generator of the form $\mL_0 + \mA$ (if we have a symmetric perturbation
$\mS$ replace $\mL_0$ by $\mL_0+\mS$).  We notice first that any (bounded) operator $B$ can be written as a sum of a self-adjoint
part $(B+B^*)/2$ and an anti self-adjoint  part $(B- B^*)/2$.  Since $f$ is real-valued, only the self-adjoint part of the inverse of
$-\mL_0 - \mA$  matters in the asymptotic variance.  To compute we first write
\begin{equation*}
(-\mL_0 - \mA)^{-1} \,=\, (-\mL_0)^{-1/2} \left( \1 + (-\mL_0)^{-1/2} (-A) (-\mL_0)^{-1/2} \right)^{-1}  (-\mL_0)^{-1/2}
\end{equation*}

Set ${\mathcal B} \equiv(-\mL_0)^{-1/2} (-A) (-\mL_0)^{-1/2}$ which is anti-selfadjoint and thus
$(\1 + \mB)(\1 - \mB) = \1 - \mB^2 = \1 + \mB^*\mB$  we obtain
\begin{equation*}
(\1 + \mB)^{-1} \,=\,  (\1 +\mB^*\mB)^{-1}  - \mB (\1 +\mB^*\mB)^{-1}  \,.
\end{equation*}

Since $\mB^{*}=-\mB$ and $\mB$ commutes with $(\1 +\mB^*\mB)^{-1}$, we then have that $\langle g ,  \mB( \1 + \mB^*\mB)^{-1} g \rangle=0$. The latter implies that
\begin{equation*}
\sigma^{2}(f) \,=\, \langle (-\mL_0)^{1/2} (f - \bar{f}), (\1 +\mB^*\mB)^{-1} (-\mL_0)^{1/2} (f - \bar{f}) \rangle \,.
\end{equation*}
Since $\mB^*\mB$ is nonnegeative we conclude,  as in the case of reversible perturbations, that $\sigma_0(f) \le \sigma(f)$.  \qed

\subsection{Large deviations.}
Finally we turn to large deviations. Let us assume that  all the processes involved satisfy a large deviation principle
for the empirical measure
\begin{equation*}
\mu_T \,=\, \frac{1}{T} \int_{0}^{T} \delta_{X(s)} \, ds
\end{equation*}
with a rate function $I(\mu)$ which is given by Donsker-Varadhan formula
\begin{equation}\label{BV}
I ( \mu) \,=\, -\inf_{ u >0, u \in D(\mL)} \int  \frac{ \mL u}{u} \, d \mu  \,.
\end{equation}
Symbolically, we write
\begin{equation*}
\mathbb{P} \left\{ \mu_{t}  \approx \mu \right\} \asymp e^{- t I(\mu)}
\end{equation*}
where $\asymp$ denotes logarithmic equivalence and the rate function $I(\mu)$
quantifies the exponential rate at which the random measure $\mu_t$ converges to $\pi$.  Clearly, the larger $I$ is, the faster
the convergence occurs.

In particular, the formal definition is as follows. Let $E$ be a Polish space, i.e., a complete and separable metric space. Denoting by
$\mathcal{P}(E)$ the space of all probability measures on $E$,  we equip $\mathcal{P}(E)$ with the topology of
weak convergence, which makes $\mathcal{P}(E)$  metrizable and a Polish space.

\begin{definition}\label{Def:LDP}
Consider a sequence of random probability measures $\{\mu_{t}\}$. The family $\{\mu_{t}\}$ is said to satisfy a large deviations principle (LDP) with rate
 function (equivalently action functional) $I:\mathcal{P}(E)\mapsto [0,\infty]$ if the following conditions hold:
\begin{itemize}
\item{For all open sets $O\subset \mathcal{P}(E)$, we have
\[
\liminf_{t\rightarrow\infty}\frac{1}{t}\log \mathbb{P}\left\{\mu_{t}\in O\right\}\geq -\inf_{\mu\in O}I(\mu)
\] }
\item{For all closed sets $F\subset \mathcal{P}(E)$, we have
\[
\limsup_{t\rightarrow\infty}\frac{1}{t}\log \mathbb{P}\left\{\mu_{t}\in F\right\}\leq -\inf_{\mu\in F}I(\mu)
\] }
\item{The level sets $\{\mu: I(\mu)\leq M\}$ are compact in $\mathcal{P}(E)$ for all $M<\infty$.}
\end{itemize}
\end{definition}

We also assume that in the reversible case we have the following
\begin{equation}\label{DVreversible}
I_o ( \mu) \,=\,  \left\langle \left(\frac{d \mu}{d\pi}\right)^{1/2},  (- \mL_0) \left(\frac{d \mu}{d\pi}\right)^{1/2} \right\rangle
\end{equation}
This has been proved in various cases, for discrete state space Markov chains and diffusions with smooth transition probability densities in \cite{DonskerVaradhan1975}.  The case of general jump processes is only partially understood, see \cite{DupuisYufei2013} for the reversible case where a generalization of \eqref{DVreversible} is proved. Using this we obtain Lemma \ref{L:LargeDeviationsMeasures}.

\begin{lemma}{\bf [Large deviations for empirical measures].}\label{L:LargeDeviationsMeasures} Let us consider measures $\mu\in\mathcal{P}(E)$ such that $\left(d\mu/d\pi\right)^{1/2}\in D(\mathcal{L}_{0})$. Let $I(\mu)$ be the rate function associated with $\mL$ and $I_{o}(\mu)$ the rate function associated with $\mL_{0}$. We have
\begin{equation*}
I(\mu) \ge I_o(\mu)
\end{equation*}
\end{lemma}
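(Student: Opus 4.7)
The plan is to use the Donsker--Varadhan variational formula \eqref{BV} as a one-sided bound and plug in a carefully chosen test function. Recall that
\[
I(\mu) \,=\, -\inf_{u>0,\, u \in D(\mL)} \int \frac{\mL u}{u}\, d\mu \,=\, \sup_{u>0,\, u \in D(\mL)} \int \frac{-\mL u}{u}\, d\mu,
\]
so for every admissible $u$ one has the lower bound $I(\mu) \ge \int (-\mL u)/u\, d\mu$.

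The natural test function, suggested by formula \eqref{DVreversible} for the reversible rate function, is
\[
g \,=\, (d\mu/d\pi)^{1/2}.
\]
Since $d\mu = g^2\, d\pi$, plugging this into the variational lower bound gives
\[
I(\mu) \,\ge\, \int \frac{-\mL g}{g}\, g^2\, d\pi \,=\, \langle g, -\mL g\rangle.
\]
Now decompose $-\mL = -\mL_0 - \mS - \mA$ and evaluate each piece against the real-valued function $g$:
\begin{itemize}
\item $\langle g, -\mA g\rangle = 0$, since $\mA$ is antiself-adjoint and $g$ is real-valued;
\item $\langle g, -\mS g\rangle \ge 0$, since $\mS$ is negative definite by assumption;
\item $\langle g, -\mL_0 g\rangle = I_o(\mu)$, which is exactly the reversible rate function \eqref{DVreversible}.
\end{itemize}
Combining these three facts yields $I(\mu) \ge I_o(\mu) + \langle g, -\mS g\rangle \ge I_o(\mu)$, as desired. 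As a byproduct the argument also quantifies the improvement: the gap between the two rate functions is at least the Dirichlet-type form $\langle g, -\mS g\rangle$ associated with the reversible perturbation.

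The main obstacle is the domain/regularity issue: the variational bound requires $g > 0$ and $g \in D(\mL)$, whereas the hypothesis only supplies $g \in D(\mL_0)$. I would handle the positivity by restricting attention to measures $\mu$ equivalent to $\pi$ (or approximating $g$ by $g+\varepsilon$ and passing to the limit), and handle the domain issue using the hypotheses $D(\mL_0)\subset D(\mS)$ from Subsection \ref{SS:Reversible} and $D(\mL)\subset D(\mA)$ from Subsection \ref{SS:Irreversible}, so that $\mL g = \mL_0 g + \mS g + \mA g$ is well-defined on a sufficiently large class of densities. For more singular $\mu$ the inequality is then obtained by lower semicontinuity of both $I$ and $I_o$ and the fact that the Donsker--Varadhan formula assigns value $+\infty$ to measures outside the admissible class, so the inequality is automatic there.
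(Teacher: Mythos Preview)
Your argument is correct and is essentially the paper's own proof: both plug the test function $u_0=(d\mu/d\pi)^{1/2}$ into the Donsker--Varadhan variational formula \eqref{BV}, convert $\int(-\mL u_0)/u_0\,d\mu$ into $\langle u_0,-\mL u_0\rangle$, and then use the antisymmetry of $\mA$ and the negative definiteness of $\mS$ together with \eqref{DVreversible}. The only cosmetic difference is that the paper treats the two perturbations separately (invoking \eqref{DVreversible} directly for $\mL_0+\mS$ since that operator is still self-adjoint, and the variational bound for the $\mA$ piece), whereas you handle $\mS$ and $\mA$ in a single stroke; your version has the mild bonus of making the quantitative gap $\langle g,-\mS g\rangle$ explicit. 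Your discussion of the domain and positivity issues is also more careful than the paper's, which simply assumes $u_0$ is admissible.
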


\proof For reversible perturbations by negative definite operators $\mS$ this follows directly from the formula \eqref{DVreversible}.  For nonreversible perturbations, we can simply take $u=u_{0}$ in (\ref{BV}), where
\begin{equation*}
u_0 = \left(\frac{d \mu}{d \pi}\right)^{1/2}.
\end{equation*}

Then we have
\begin{equation*}
I(\mu) \ge  \int  \frac{ (-\mL_0- \mA) u_0}{u_0} \, d \mu  \,=\, \int  u_0  (-\mL_0- \mA) u_0 \, d \pi  \,=\, \int  u_0  (-\mL_0) u_0 \, d \pi\,=\, I_o ( \mu).
\end{equation*}
\qed

For $f \in {\mathcal C}(E)$ the contraction principle implies that the ergodic average $\frac{1}{t} \int_0^t f(X_s) ds$
satisfies a large deviation  principle with rate function
\begin{equation*}
\tilde{I}_{f}(\ell)=\inf_{\mu\in\mathcal{P}(E)}\left\{I(\mu): \left<f,\mu\right>=\ell\right\} \,.
\end{equation*}

It is a nonnegative convex function with a minimum $\tilde{I}_{f} (\bar{f})=0$ at $\ell= \bar{f}$ and it
is finite for the range of $f$, i.e. on the open interval $( \min_x f(x)\,, \max_x f(x))$.  One uses the  informal notation
$\mathbb{P} \left\{  \frac{1}{t} \int_0^t f(X_s)   \approx \ell \right\} \asymp e^{- t \tilde{I}_{f} (\ell)}$ to express that
\begin{equation*}
\lim_{\epsilon \to 0} \lim_{t \to \infty} t \log \mathbb{P} \left\{ \frac{1}{t} \int_0^t f(X_s)\, ds  \in (\ell - \epsilon, \ell + \epsilon) \right\} \,=\,
\tilde{I}_{f} (\ell)
\end{equation*}
if $\ell$ is in the range of $f$.

A Markov process whose rate function $\tilde{I}_{f}(\ell)$ is higher means that its ergodic average converges faster to its equilibrium value. In fact, we have the following lemma.

\begin{lemma}{\bf [Large deviations for observables].}\label{L:LargeDeviationsObservable}
 Consider $f \in \mathcal{C}^{(\alpha)}(E)$ and $\ell \in ( \min_x f(x), \max_x f(x))$ with $\ell \not= \int f d \pi$.
Then we have
\begin{equation*}
{\tilde I}_{f} (\ell) \ge {\tilde I}_{f,0}(\ell) \,,
\end{equation*}
where $\tilde{I}_{f,0}(\ell)=\inf_{\mu\in\mathcal{P}(E)}\left\{I_{o}(\mu): \left<f,\mu\right>=\ell\right\} \,.$
\end{lemma}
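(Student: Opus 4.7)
The plan is to obtain $\tilde I_f(\ell) \geq \tilde I_{f,0}(\ell)$ by combining the contraction principle with the pointwise bound of Lemma \ref{L:LargeDeviationsMeasures}. Since $f \in \mathcal{C}(E)$, the evaluation map $\mu \mapsto \langle f, \mu\rangle$ is continuous on $\mathcal{P}(E)$ in the weak topology, so the empirical-measure LDP of Definition \ref{Def:LDP} contracts to an LDP for $\frac{1}{t}\int_0^t f(X_s)\,ds$ with rate function
\begin{equation*}
\tilde I_f(\ell) \,=\, \inf_{\mu \in \mathcal{P}(E):\, \langle f, \mu\rangle = \ell} I(\mu),
\end{equation*}
matching the contracted formula already used to define $\tilde I_{f,0}(\ell)$ in the statement.

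Given this representation, the argument is essentially one line. For any $\mu$ with $\langle f, \mu\rangle = \ell$ and $(d\mu/d\pi)^{1/2} \in D(\mathcal{L}_0)$, Lemma \ref{L:LargeDeviationsMeasures} yields $I(\mu) \geq I_o(\mu) \geq \tilde I_{f,0}(\ell)$. Passing to the infimum over such regular $\mu$ on the left, and then extending to arbitrary admissible $\mu$, gives $\tilde I_f(\ell) \geq \tilde I_{f,0}(\ell)$. In the trivial case $\tilde I_{f,0}(\ell) = +\infty$ there is nothing to prove.

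The main obstacle is the extension from regular to arbitrary $\mu$: Lemma \ref{L:LargeDeviationsMeasures} is stated only for measures whose square-root density lies in $D(\mathcal{L}_0)$, whereas a measure contributing to the infimum defining $\tilde I_f(\ell)$ may fail this regularity. To bridge the gap I would combine three ingredients: the lower semicontinuity of $I$ and $I_o$ (both are rate functions with compact sublevel sets by Definition \ref{Def:LDP}); the density of $D(\mathcal{L}_0)$ in the form domain $D((-\mathcal{L}_0)^{1/2})$ that governs the reversible formula \eqref{DVreversible}; and the continuity of $\mu \mapsto \langle f, \mu\rangle$ together with the openness of the range of $f$ around $\ell$ that is ensured by the hypothesis $\ell \in (\min_x f(x), \max_x f(x))$. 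Together these allow me to approximate any candidate $\mu$ by a sequence of regular probability measures $\mu_n \propto u_n^2 \pi$ with $u_n \in D(\mathcal{L}_0)$, arrange $\langle f, \mu_n\rangle \to \ell$ and $I_o(\mu_n) \to I_o(\mu)$, apply Lemma \ref{L:LargeDeviationsMeasures} to each $\mu_n$, and pass to the limit to recover $I(\mu) \geq \tilde I_{f,0}(\ell)$ for every admissible $\mu$, which is the last step needed to conclude.
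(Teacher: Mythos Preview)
Your core argument---apply Lemma~\ref{L:LargeDeviationsMeasures} pointwise on the constraint set $\{\mu:\langle f,\mu\rangle=\ell\}$ and then take the infimum---is exactly what the paper does. The paper's proof is a one-liner: it notes that $\tilde I_f(\ell)$ is an infimum of $I(\mu)$ over an affine constraint (hence convex in $\ell$) and then invokes Lemma~\ref{L:LargeDeviationsMeasures} directly, without addressing the regularity hypothesis $(d\mu/d\pi)^{1/2}\in D(\mathcal L_0)$ that you correctly flag.

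Where your proposal overreaches is in the approximation scheme you outline to close that gap. You construct regular $\mu_n\to\mu$ with $I_o(\mu_n)\to I_o(\mu)$, apply Lemma~\ref{L:LargeDeviationsMeasures} to get $I(\mu_n)\ge I_o(\mu_n)$, and then say ``pass to the limit to recover $I(\mu)\ge \tilde I_{f,0}(\ell)$.'' But lower semicontinuity of $I$ gives only $I(\mu)\le \liminf_n I(\mu_n)$, which is the wrong direction: from $\liminf_n I(\mu_n)\ge I_o(\mu)$ and $I(\mu)\le \liminf_n I(\mu_n)$ you cannot deduce $I(\mu)\ge I_o(\mu)$. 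To make the approximation route work you would need the opposite control, namely $\limsup_n I(\mu_n)\le I(\mu)$, i.e.\ an approximation that does not increase $I$ (not merely one that recovers $I_o$). That is a much stronger requirement and typically needs a setting-specific construction (e.g.\ mollification that is compatible with the full generator $\mathcal L$, or an argument that $I(\mu)<\infty$ already forces enough regularity on $\mu$). The paper sidesteps this entirely; your extra care is warranted, but the fix as written does not go through.
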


\begin{proof}
By definition $\tilde{I}_{f} (\ell)$ is the infimum of  $I(\mu)$ over all $\mu\in\mathcal{P}(E)$ such that
$\left<f,\mu\right>=\ell$. It easily follows by the affine form of the constraint $\left<f,\mu\right>=\ell$ in the definition of $\tilde{I}_{f} (\ell)$, that $\tilde{I}_{f} (\ell)$ is a convex functional. Then, convexity and Lemma \ref{L:LargeDeviationsMeasures} trivially imply the statement of the lemma.
\end{proof}

We conclude this section by mentioning that Lemma \ref{L:AsymptoticVariance} can be seen as a simple consequence of Lemma \ref{L:LargeDeviationsObservable}. Indeed, it is well known in the large deviations literature, see for example \cite{Hollander2000}, that the asymptotic variance is inversely proportional to the second derivative of the large deviations rate function evaluated at $\ell=\bar{f}$, i.e.,
\[
\sigma^{2}(f)=\frac{1}{2\tilde{I}^{''}_{f}(\bar{f})}.
\]

Then Lemma \ref{L:LargeDeviationsObservable} and convexity of the rate function immediately imply the statement of Lemma \ref{L:AsymptoticVariance}. In addition to that, as we shall see in Section \ref{S:LDPanalysisDiffusions},
a more careful analysis of the large deviations rate function reveals when there is a strict improvement in performance. It turns out that whether or not one has strict improvement in performance is related to the solution of a specific nonlinear Poisson equation.   We note here that the Poisson equation that we derive is reminiscent of Poisson equations that have appeared in the literature in the analysis of MCMC algorithms, see for example Chapter 17 of \cite{MeynTweedie}. In this paper, we see that the specific  Poisson equation that we derive, characterizes when irreversible perturbations lead to strict improvement in performance.

\section{Large deviations analysis of irreversible perturbation for Markov chains.}\label{S:LDPanalysisMarkovChain}
We consider a finite state aperiodic irreducible Markov chain with transition probability kernel $k_{0}(i,j)$ and invariant measure $\pi$. The generator of such a jump Markov process takes the form
\[
\mL_{0}g(i)=\sum_{j}\left[k_{0}(i,j)(g(j)-g(i))\right]
\]

Hence, the Donsker-Varadhan rate function takes the form
\[
I(\mu)=-\inf_{g>0}\left[\sum_{i}\frac{\mu(i)}{g(i)}\sum_{j}\left[k_{0}(i,j)(g(j)-g(i))\right]\right]
\]
and as it is proven in \cite{MaesNetocnyWynants2012} this can be simplified to
\[
I(\mu)=\sum_{i,j}\mu(i)k_{0}(i,j)\left(1-e^{\frac{V_{0}(j)-V_{0}(i)}{2}}\right)=\sum_{i,j}\mu(i)k_{0}(i,j)- \sum_{i,j}\mu(i)k_{0}(i,j)e^{\frac{V_{0}(j)-V_{0}(i)}{2}}
\]
where $V_{0}$ is the unique solution (up to a constant) of the algebraic equation
\begin{equation}
\sum_{j}\left[k_{0}(i,j)e^{\frac{V_{0}(j)-V_{0}(i)}{2}}\mu(i)-k_{0}(j,i)e^{\frac{V_{0}(i)-V_{0}(j)}{2}}\mu(j)\right]=0, \text{ for all }i\in K.\label{Eq:AlgebraicEquationForIrreversibleMC}
\end{equation}

The last relation shows that $\kappa_{V_{0}}(i,j)=k_{0}(i,j)e^{\frac{V_{0}(j)-V_{0}(i)}{2}}$ is the transition probability density function for the Markov chain with invariant measure $\mu$. Obviously if $\mu=\pi$, the  only possible solution to (\ref{Eq:AlgebraicEquationForIrreversibleMC}) is $V_{0}(i)=\text{constant}$ for every $i\in K$. As expected, this of course means that $I(\pi)=0$. Notice that under irreducibility,  we can write
\[
I(\mu)=\sum_{i,j}\mu(i)k_{0}(i,j)- \sum_{i,j}\mu(i)\kappa_{V_{0}}(i,j)
\]
which means that the rate function can be viewed as the difference between the expected escape rates $\sum_{i,j}\mu(i)k_{0}(i,j)$ and $\sum_{i,j}\mu(i)\kappa_{V_{0}}(i,j)$. The latter naturally estimates the difference in the number of transitions per unit time in the process.

As it has been observed in Example \ref{IrreversibleMC}, if we consider a matrix $\Gamma$ that is anti-symmetric and its rows sum to zero, i.e.,
\[
\Gamma=-\Gamma^{T} \quad \text{ and for every }i\in K \quad \sum_{j}\Gamma(i,j)=0
\]
then, the Markov chain with transition probability matrix $k_{\Gamma}(i,j)=k_{0}(i,j)+\frac{1}{\pi(i)}\Gamma(i,j)$ will have the same invariant distribution $\pi$. Let us denote by $V_{\Gamma}$ the solution to  (\ref{Eq:AlgebraicEquationForIrreversibleMC}) with $k_{\Gamma}$ in place of $k_{0}$.

Our goal  is to compare the rate functions of the two Markov chains, the one with transition probability function $k_{0}(i,j)$ and the one with transition probability function $k_{\Gamma}(i,j)$. Let us denote the associated large deviations rate functions by
$I_{0}(\mu)$ and $I_{\Gamma}(\mu)$ respectively. Let us define, for a given transition rate function $k(i,j)$ and a function $V$ defined on the state space of the Markov chain, the functional
\[
\mathcal{Y}_{k}(V)=\sum_{i,j}\mu(i)k(i,j)e^{\frac{V(j)-V(i)}{2}}
\]

It is easy to see that the functional $\mathcal{Y}_{k}(V)$ is non-negative and, under the irreducibility assumption, strictly convex, \cite{MaesNetocnyWynants2012}, with respect to functions $V$ defined on the state space of the Markov chain. The unique minimum for $\mathcal{Y}_{k_{0}}$ is attained at $V=V_{0}$ whereas the unique minimum for $\mathcal{Y}_{k_{\Gamma}}$ is attained at $V=V_{\Gamma}$.

Let us prove now, using Lemma \ref{L:LargeDeviationsMeasures} that $\mathcal{Y}_{k_{0}}(V_{0})\geq\mathcal{Y}_{k_{\Gamma}}(V_{\Gamma})$. In particular, this means that the minimum value of the functional $\mathcal{Y}_{k_{\Gamma}}(\cdot)$ is below the minimum value of the functional $\mathcal{Y}_{k_{0}}(\cdot)$. This means that under irreversibility, there are more transitions per unit time in the process, which then naturally leads to faster convergence to equilibrium.

\begin{proposition}\label{P:ComparisonRateFcnDiscreteCaseIrreversible}
With the notation above we have that
\begin{align*}
I_{\Gamma}(\mu)-I_{0}(\mu)%&=\left[\sum_{i,j}\mu(i)k_{0}(i,j)e^{\frac{V_{0}(j)-V_{0}(i)}{2}}-\sum_{i,j}\mu(i)\left(k_{0}(i,j)+\frac{1}{\pi(i)}\Gamma(i,j)\right)e^{\frac{V_{\Gamma}(j)-V_{\Gamma}(i)}{2}}\right]\nonumber\\
&=\mathcal{Y}_{k_{0}}(V_{0})-\mathcal{Y}_{k_{\Gamma}}(V_{\Gamma})\geq 0\nonumber
\end{align*}
\end{proposition}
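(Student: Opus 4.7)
My plan is to exploit the explicit two-term decomposition of the rate function given just above the statement, together with Lemma \ref{L:LargeDeviationsMeasures}, so that the proof splits cleanly into an algebraic identity plus one invocation of the general theory.

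First I would write, for any admissible transition kernel $k$ with invariant measure $\pi$ and associated minimizer $V$ of the algebraic equation (\ref{Eq:AlgebraicEquationForIrreversibleMC}),
\begin{equation*}
I(\mu)\;=\;\sum_{i,j}\mu(i)k(i,j)\;-\;\mathcal{Y}_{k}(V).
\end{equation*}
Applying this to both $k_{0}$ (with minimizer $V_{0}$) and $k_{\Gamma}$ (with minimizer $V_{\Gamma}$), the proposed equality reduces to showing that the first (``total escape rate'') terms coincide:
\begin{equation*}
\sum_{i,j}\mu(i)\,k_{\Gamma}(i,j)\;=\;\sum_{i,j}\mu(i)\,k_{0}(i,j).
\end{equation*}
This is the key algebraic step, and it is immediate from the structure of the perturbation: since $k_{\Gamma}(i,j)-k_{0}(i,j)=\Gamma(i,j)/\pi(i)$, the difference equals $\sum_{i}\frac{\mu(i)}{\pi(i)}\sum_{j}\Gamma(i,j)=0$ because every row of $\Gamma$ sums to zero by hypothesis. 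This establishes $I_{\Gamma}(\mu)-I_{0}(\mu)=\mathcal{Y}_{k_{0}}(V_{0})-\mathcal{Y}_{k_{\Gamma}}(V_{\Gamma})$.

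For the inequality $\mathcal{Y}_{k_{0}}(V_{0})-\mathcal{Y}_{k_{\Gamma}}(V_{\Gamma})\geq 0$, I would invoke Lemma \ref{L:LargeDeviationsMeasures}. The perturbation $k_{\Gamma}-k_{0}$ corresponds at the generator level exactly to the operator $\mA$ constructed in Example \ref{IrreversibleMC}, which is antiselfadjoint on $L^{2}_{\R}(\pi)$ and preserves $\pi$. Hence $\mL_{\Gamma}=\mL_{0}+\mA$ falls into the irreversible-perturbation framework of Section \ref{SS:Irreversible}, and Lemma \ref{L:LargeDeviationsMeasures} gives $I_{\Gamma}(\mu)\geq I_{0}(\mu)$. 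Combining with the identity from the previous paragraph yields the claimed nonnegativity.

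I do not anticipate serious obstacles: the whole argument is just bookkeeping plus one application of Lemma \ref{L:LargeDeviationsMeasures}. The only step requiring care is the verification that the ``total rate'' terms cancel, and there the zero-row-sum property of $\Gamma$ does all the work. A minor point worth flagging is that the representation $I(\mu)=\sum_{i,j}\mu(i)k(i,j)-\mathcal{Y}_{k}(V)$ was stated under irreducibility of the underlying chain; so I would note at the outset that $k_{\Gamma}$ inherits irreducibility from $k_{0}$ provided the coefficients in $\Gamma$ are chosen small enough to keep all off-diagonal rates nonnegative, as already required in Example \ref{IrreversibleMC}.
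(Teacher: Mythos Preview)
Your proposal is correct and follows essentially the same route as the paper: both arguments use the decomposition $I(\mu)=\sum_{i,j}\mu(i)k(i,j)-\mathcal{Y}_{k}(V)$, cancel the total-escape-rate terms via $\sum_{j}\Gamma(i,j)=0$, and then invoke Lemma~\ref{L:LargeDeviationsMeasures} for the inequality. The paper simply writes out the chain of equalities line by line rather than stating the two-term decomposition first, but the content is identical.
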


\begin{proof}[Proof of Proposition \ref{P:ComparisonRateFcnDiscreteCaseIrreversible}]
We have the following computations
\begin{align}
I_{\Gamma}(\mu)-I_{0}(\mu)&=\sum_{i,j}\mu(i)k_{\Gamma}(i,j)\left(1-e^{\frac{V_{\Gamma}(j)-V_{\Gamma}(i)}{2}}\right)-\sum_{i,j}\mu(i)k_{0}(i,j)\left(1-e^{\frac{V_{0}(j)-V_{0}(i)}{2}}\right)\nonumber\\
&=\sum_{i,j}\mu(i)\left(k_{0}(i,j)+\frac{1}{\pi(i)}\Gamma(i,j)\right)\left(1-e^{\frac{V_{\Gamma}(j)-V_{\Gamma}(i)}{2}}\right)-\sum_{i,j}\mu(i)k_{0}(i,j)\left(1-e^{\frac{V_{0}(j)-V_{0}(i)}{2}}\right)\nonumber\\
&=\sum_{i,j}\mu(i)\left(k_{0}(i,j)+\frac{1}{\pi(i)}\Gamma(i,j)-k_{0}(i,j)\right) \nonumber\\ &\qquad+\left[\sum_{i,j}\mu(i)k_{0}(i,j)e^{\frac{V_{0}(j)-V_{0}(i)}{2}}-\sum_{i,j}\mu(i)\left(k_{0}(i,j)+\frac{1}{\pi(i)}\Gamma(i,j)\right)e^{\frac{V_{\Gamma}(j)-V_{\Gamma}(i)}{2}}\right]\nonumber\\
&=\sum_{i}\frac{\mu(i)}{\pi(i)}\sum_{j}\Gamma(i,j) +\nonumber\\ &\qquad+\left[\sum_{i,j}\mu(i)k_{0}(i,j)e^{\frac{V_{0}(j)-V_{0}(i)}{2}}-\sum_{i,j}\mu(i)\left(k_{0}(i,j)+\frac{1}{\pi(i)}\Gamma(i,j)\right)e^{\frac{V_{\Gamma}(j)-V_{\Gamma}(i)}{2}}\right]\nonumber\\
&=\left[\sum_{i,j}\mu(i)k_{0}(i,j)e^{\frac{V_{0}(j)-V_{0}(i)}{2}}-\sum_{i,j}\mu(i)\left(k_{0}(i,j)+\frac{1}{\pi(i)}\Gamma(i,j)\right)e^{\frac{V_{\Gamma}(j)-V_{\Gamma}(i)}{2}}\right]\nonumber\\
&=\mathcal{Y}_{k_{0}}(V_{0})-\mathcal{Y}_{k_{\Gamma}}(V_{\Gamma})
\end{align}
In the last computation we used the fact that $\sum_{j}\Gamma(i,j)=0$. Since, by Lemma \ref{L:LargeDeviationsMeasures}, we have that $I_{\Gamma}(\mu)\geq I_{0}(\mu)$, we conclude the proof of the proposition.
\end{proof}

\section{Large deviations analysis of reversible and irreversible perturbation for diffusions.}\label{S:LDPanalysisDiffusions}

It turns out that, in the case of diffusion processes,  the large deviations criterion can give more concrete information on how much improvement one gets by reversible and irreversible perturbations.  Let us consider the overdamped Langevin equation
\begin{equation}\label{Langevin1}
dX_0(t) \,=\,  -   \nabla U(X_0(t)) dt  + \sqrt{2 T} dB(t)
\end{equation}

In Examples \ref{Ex:ReversibleDiffusion} and \ref{Ex:IrreversibleDiffusion} we proposed specific reversible and irreversible perturbations of the infinitesimal
 generator of (\ref{Langevin}) that, based on Lemmas \ref{L:SpectralGap}, \ref{L:AsymptoticVariance},  \ref{L:LargeDeviationsMeasures} and \ref{L:LargeDeviationsObservable}, lead to faster convergence to equilibrium, irrespectively of which  performance criteria is being used. Our goal in this section is to characterize the improvement in sampling in more precise terms. We use the large deviations formalism for empirical measures.

As it turns out, we can write down how much the rate function increases when a reversible or an irreversible perturbation is performed. Based on the corresponding formula we can then characterize exactly when there is a strict increase in performance. The special case of irreversible perturbations of diffusions from Example \ref{Ex:IrreversibleDiffusion} has been extensively studied in \cite{HwangMaSheu2005} based on spectral gap criteria and recently on \cite{ReyBelletSpiliopoulos2014,ReyBelletSpiliopoulos2014b} based on the asymptotic variance and large deviations rate function criteria. We refer the interested reader to \cite{HwangMaSheu2005,ReyBelletSpiliopoulos2014,ReyBelletSpiliopoulos2014b} for further details and for numerical results. In this section we compare how reversible and irreversible perturbations for general Markov processes compare via the lens of large deviations theory. The results of \cite{HwangMaSheu2005,ReyBelletSpiliopoulos2014,ReyBelletSpiliopoulos2014b} are then essentially recovered as a special case of the general theory of this paper.

Let us start our analysis with a very general result on the large deviations principle for the invariant measure of diffusion processes. In order to avoid technical issues we shall restrict our discussion to diffusion taking values on a $d-$dimensional compact Riemannian manifold $E$ of class $C^{3}$ without boundary.  In particular, we have the following general theorem.

\begin{theorem}\label{Th:Gartner} Consider the SDE on $E$ with infinitesimal generator
\[
\mathcal{L} = \frac{1}{2}\nabla\cdot a(x)\nabla + b(x) \nabla
\]
with $b_{i},a_{i,j} \in {\mathcal C}^1(E)$, $a(x)$ being strictly positive. Let $\mu \in \mathcal{P}(E)$, where  $\mu(dx) =p(x)dx$ is a measure with positive density $p \in \mathcal{C}^{(2 + \alpha)}(E)$ for some $\alpha >0$. The Donsker-Vardhan rate function $I(\mu)$ takes the form
\begin{equation}\label{Eq:GartnerFormula1}
I(\mu)=\frac{1}{8}\int_{E}\frac{\nabla p(x)a(x)\nabla p(x)}{p^{2}(x)}d\mu(x)-\frac{1}{2}\int_{E}\frac{b(x)\nabla p(x)}{p(x)}d\mu(x)+\frac{1}{2}\int_{E}\nabla\phi(x)a(x)\nabla\phi(x) d\mu(x)
\end{equation}
where $\phi$ is the unique (up to constant) solution of the equation
\begin{equation}\label{Eq:GartnerFormula1Constraint}
\text{div}\left[p(x)\left(b(x)+a(x)\nabla\phi(x)\right)\right]=0.
\end{equation}
\end{theorem}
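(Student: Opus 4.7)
The plan is to start from the Donsker--Varadhan variational representation
\begin{equation*}
I(\mu)\,=\,-\inf_{u>0,\,u\in D(\mL)}\int\frac{\mL u}{u}\,d\mu,
\end{equation*}
parameterize the positive function as $u=e^v$, compute $e^{-v}\mL e^v$ explicitly, and then identify both the optimizing $v$ and the value of the functional at the optimum. A direct computation using $\mL=\tfrac{1}{2}\nabla\cdot a\nabla + b\cdot\nabla$ and the chain rule gives
\begin{equation*}
e^{-v}\mL e^v \,=\, \tfrac{1}{2}\nabla v\cdot a\nabla v+\tfrac{1}{2}\nabla\cdot(a\nabla v)+b\cdot\nabla v.
\end{equation*}
Writing $d\mu = p\,dx$ and integrating by parts on the closed manifold $E$ to move the divergence off the middle term, one finds
\begin{equation*}
F(v)\;:=\;-\!\int\frac{\mL e^v}{e^v}\,d\mu\,=\,\int\!\Bigl[-\tfrac{1}{2}\nabla v\cdot a\nabla v+\nabla v\cdot\bigl(\tfrac{1}{2}a\tfrac{\nabla p}{p}-b\bigr)\Bigr]d\mu,
\end{equation*}
so $I(\mu)=\sup_v F(v)$ with $F$ a strictly concave functional of $\nabla v$.

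Next I would compute the Euler--Lagrange equation for $F$. Varying $v\mapsto v+\varepsilon w$ and integrating by parts yields the critical-point equation
\begin{equation*}
\mathrm{div}\bigl[p\bigl(a\nabla v-\tfrac{1}{2}a\tfrac{\nabla p}{p}+b\bigr)\bigr]\,=\,0.
\end{equation*}
This suggests the ansatz $v^\star=\tfrac{1}{2}\log p+\phi$, for then $a\nabla v^\star-\tfrac{1}{2}a\nabla p/p+b=a\nabla\phi+b$, and the critical-point equation collapses to exactly \eqref{Eq:GartnerFormula1Constraint}. Since $a$ is strictly positive and $p\in C^{2+\alpha}$ is positive on the compact manifold $E$ without boundary, standard elliptic theory (Fredholm alternative, with the solvability condition $\int\mathrm{div}(pb)\,dx=0$ holding automatically) produces a unique $\phi\in C^{2+\alpha}$ up to an additive constant. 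Concavity of $F$ then guarantees that $v^\star$ realizes the supremum.

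Finally, I would substitute $\nabla v^\star=\tfrac{1}{2}\nabla p/p+\nabla\phi$ into $F$ and simplify. A direct expansion of the quadratic and linear terms yields
\begin{equation*}
F(v^\star)\,=\,\int\!\Bigl[\tfrac{1}{8}\tfrac{\nabla p\cdot a\nabla p}{p^2}-\tfrac{1}{2}|\nabla\phi|_a^2-\tfrac{1}{2}\tfrac{b\cdot\nabla p}{p}-b\cdot\nabla\phi\Bigr]d\mu,
\end{equation*}
where $|\xi|_a^2=\xi\cdot a\xi$. The only term that does not already match the target formula is the last one. Here I would exploit the constraint one more time: integration by parts combined with $\mathrm{div}[p(b+a\nabla\phi)]=0$ gives
\begin{equation*}
-\!\int b\cdot\nabla\phi\,d\mu\,=\,\int\phi\,\mathrm{div}(pb)\,dx\,=\,-\!\int\phi\,\mathrm{div}(pa\nabla\phi)\,dx\,=\,\int|\nabla\phi|_a^2\,d\mu.
\end{equation*}
Combining this identity with the previous display immediately produces \eqref{Eq:GartnerFormula1}.

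The main technical obstacles are (i) verifying that the substitution $u=e^v$ exhausts the Donsker--Varadhan supremum over admissible $u>0$ in $D(\mL)$, and (ii) justifying the integrations by parts, both of which are handled by the compactness of $E$, the $C^{2+\alpha}$ regularity of $p$, and the ellipticity of $\mL$; the algebra above is then essentially routine.
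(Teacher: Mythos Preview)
Your argument is correct: the substitution $u=e^{v}$, the integration by parts yielding the concave functional $F(v)$, the Euler--Lagrange computation, the ansatz $v^{\star}=\tfrac12\log p+\phi$, and the final identity $-\int b\cdot\nabla\phi\,d\mu=\int|\nabla\phi|_a^2\,d\mu$ coming from the constraint all check out line by line. The technical caveats you flag (that $u=e^{v}$ suffices in the variational formula, and that elliptic regularity on the compact manifold furnishes $\phi$) are the right ones and are standard under the stated hypotheses.

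As for comparison with the paper: the paper does not actually give a proof but defers to Lemma~3.2 of \cite{ReyBelletSpiliopoulos2014}, which in turn rests on G\"artner's framework \cite{Gartner1977}. G\"artner's route is dual to yours: rather than optimizing the Donsker--Varadhan functional directly, one represents $I(\mu)$ as the Legendre transform of the principal eigenvalue $\lambda(V)$ of $\mL+V$, writes the eigenfunction as $e^{\phi}$, and reads off the formula from the eigenvalue equation. The two approaches are essentially equivalent (the eigenfunction $e^{\phi}$ plays the role of your optimizer $e^{v^\star}$), but your direct variational derivation is more self-contained and makes the origin of the constraint \eqref{Eq:GartnerFormula1Constraint} as an Euler--Lagrange condition completely transparent, whereas the G\"artner route packages the existence and regularity of the optimizer into the Perron--Frobenius/Krein--Rutman theory for $\mL+V$.
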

\begin{proof}
The proof of this theorem follows the same steps as that of Lemma 3.2 in \cite{ReyBelletSpiliopoulos2014} using the general results of  G\"{a}rtner in \cite{Gartner1977}. Thus, the details are omitted.
\end{proof}

In the case of equation (\ref{Langevin}), i.e., when $b(x)=-\nabla U(x)$ is a gradient and $a(x)=2T I$, then $\phi(x)=\frac{1}{2T}U(x)+\textrm{constant}$
and we get
\begin{equation}\label{explicitreversible}
I_{o}(\mu)=\frac{T}{4}\int_{E}\left|\frac{\nabla p(x)}{p(x)}+\frac{1}{T}\nabla U(x)\right|^{2}d\mu(x)
\end{equation}
which is the usual explicit formula for the rate function in the reversible case.

In this section we want to compare the rate function for the baseline case (\ref{Langevin}) with that of the reversible perturbation of Example  \ref{Ex:ReversibleDiffusion} and that of the irreversible perturbation of Example \ref{Ex:IrreversibleDiffusion}.

For notational convenience, let us denote by
\begin{enumerate}
\item{$I_{\Sigma}(\mu)$ the rate function for the diffusion of Example \ref{Ex:ReversibleDiffusion}, i.e., when $a(x)=2T\Sigma(x)$ and $b(x)=-\Sigma(x)\nabla U(x)$,}
 \item{$I_{C}(\mu)$ the rate function for the diffusion of Example \ref{Ex:IrreversibleDiffusion}, i.e., when $a(x)=2T I$ and $b(x)=-\nabla U(x)+C(x)$, and}
  \item{ $I_{\Sigma,C}(\mu)$ the rate function for the diffusion  when $a(x)=2T\Sigma(x)$ and $b(x)=-\Sigma(x)\nabla U(x)+C(x)$.}
\end{enumerate}
Clearly, using this notation, the rate function for the reference case, (\ref{explicitreversible}), is $I_{o}(\cdot)=I_{Id,O}(\cdot)$.

Propositions \ref{T:measure1}, \ref{T:measure2} and \ref{T:measure3} summarize  the increase of the Donsker-Varadhan rate functions for empirical measures based on reversible and
irreversible perturbations.  For presentation purposes, the proofs of these results is given at the end of the section. Moreover, based on these results we can then prove that the rate function for the empirical  average of a given observable also increases under the suggested reversible and irreversible perturbations. This is Theorem \ref{T:RateFcnObservables}. Conditions, guaranteeing strict improvement in performance are also provided. It turns out that whether or not one has strict improvement in performance is related to the solution of a specific nonlinear Poisson equation.

\begin{proposition}
\label{T:measure1} Assume that the matrix $\Sigma(x) \not= I$ is such that $\Sigma(x)-I$ is nonnegative definite. For any $\mu \in \mathcal{P}(E)$ we have $I_{\Sigma}(\mu) \ge
I_0(\mu)$. If  $\mu(dx) =p(x)dx$ is a measure with positive density $p \in \mathcal{C}^{(2 + \alpha)}(E)$ for some $\alpha >0$ and $\mu \not = \pi$
then we have
\begin{equation*}
I_{\Sigma}(\mu) - I_{o}(\mu) = \frac{T}{4}\int_{E}\left(\frac{\nabla p(x)}{p(x)}+\frac{1}{T}\nabla U(x)\right)^{T}(\Sigma(x)-I)\left(\frac{\nabla p(x)}{p(x)}+\frac{1}{T}\nabla U(x)\right)d\mu(x)\geq 0
\end{equation*}
Moreover we have that if $p(x)>0$ everywhere and $\Sigma(x)-I$ is strictly positive everywhere, then $I_{\Sigma}(\mu) > I_o(\mu)$.
\end{proposition}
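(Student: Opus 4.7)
The plan is to use Theorem \ref{Th:Gartner} to obtain an explicit formula for $I_\Sigma(\mu)$ in the same spirit as the formula \eqref{explicitreversible} for $I_o(\mu)$, subtract, and recognize a manifestly nonnegative quadratic form in $\Sigma - I$. The general inequality $I_\Sigma(\mu) \ge I_o(\mu)$ follows already from Lemma \ref{L:LargeDeviationsMeasures}, so the main new content is the explicit identity for the gap and the criterion for strictness.

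First I would identify the auxiliary function $\phi$ appearing in \eqref{Eq:GartnerFormula1Constraint} for the perturbed diffusion of Example \ref{Ex:ReversibleDiffusion}, i.e.\ with $a(x)=2T\Sigma(x)$ and $b(x)=-\Sigma(x)\nabla U(x)$. The natural guess $\phi(x)=U(x)/(2T)$ makes $b(x)+a(x)\nabla\phi(x)=-\Sigma\nabla U+\Sigma\nabla U=0$ identically, so the divergence constraint is trivially satisfied; by uniqueness (up to a constant) this is the only admissible choice. This is the step that uses the reversibility structure of the perturbation in an essential way.

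Next I would substitute $a=2T\Sigma$, $b=-\Sigma\nabla U$ and $\phi=U/(2T)$ into \eqref{Eq:GartnerFormula1} and regroup the three resulting integrals. The computation mirrors the one that yields \eqref{explicitreversible}: the quadratic-in-$\nabla p/p$ term, the cross term $\tfrac{1}{2}\int (\Sigma\nabla U\cdot\nabla p)/p\,d\mu$, and the potential term $\tfrac{1}{4T}\int\nabla U\cdot\Sigma\nabla U\,d\mu$ assemble into the single quadratic form
\begin{equation*}
I_\Sigma(\mu)=\frac{T}{4}\int_{E}\left(\frac{\nabla p(x)}{p(x)}+\frac{1}{T}\nabla U(x)\right)^{T}\Sigma(x)\left(\frac{\nabla p(x)}{p(x)}+\frac{1}{T}\nabla U(x)\right)d\mu(x).
\end{equation*}
Subtracting \eqref{explicitreversible}, which is the same formula with $\Sigma$ replaced by $I$, I would immediately obtain the claimed identity with $\Sigma-I$ sandwiched in the middle. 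Nonnegativity of $\Sigma-I$ then gives $I_\Sigma(\mu)\ge I_o(\mu)$ pointwise in $\mu$, reproducing Lemma \ref{L:LargeDeviationsMeasures} in concrete form.

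For the strict inequality, assume $p>0$ everywhere and $\Sigma(x)-I$ strictly positive definite everywhere. If $I_\Sigma(\mu)=I_o(\mu)$, the integrand must vanish $\mu$-a.e., which forces $\nabla\log p(x)=-\nabla U(x)/T$ throughout $E$, hence $p(x)\propto e^{-U(x)/T}$ and $\mu=\pi$, contradicting the hypothesis $\mu\ne\pi$. The only subtlety to handle carefully will be the regularity assumption on $p$ and the verification that $\phi=U/(2T)$ is admissible in Theorem \ref{Th:Gartner} (i.e.\ enjoys the implicit smoothness demanded by G\"artner's formulation); given $U\in\mathcal{C}^{3}$ and the compactness of $E$ this is automatic, so I anticipate no real obstacle beyond bookkeeping.
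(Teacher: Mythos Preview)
Your proposal is correct and follows essentially the same approach as the paper: apply Theorem~\ref{Th:Gartner} with $a=2T\Sigma$, $b=-\Sigma\nabla U$, identify $\phi=U/(2T)$ from reversibility, and subtract the $\Sigma=I$ case. Your write-up is in fact more explicit than the paper's own proof---you verify the constraint \eqref{Eq:GartnerFormula1Constraint} directly and spell out the strict-inequality argument via $\nabla\log p=-\nabla U/T\Rightarrow\mu=\pi$, whereas the paper simply states ``due to reversibility $\phi=\frac{1}{2T}U+\text{constant}$'' and ``clearly $I_\Sigma(\mu)-I_o(\mu)>0$ if $\Sigma-I$ is strictly positive definite.''
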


\begin{proposition}\label{T:measure2} Assume that the vector field $C(x) \not=0$ is such that ${\rm div}(C(x) e^{-U(x)/T}) =0$ and the matrix $\Sigma(x)$ is strictly positive definite. For any $\mu \in \mathcal{P}(E)$ we have $I_{\Sigma, C}(\mu) \ge
I_{\Sigma}(\mu)$. If  $\mu(dx) =p(x)dx$ is a measure with positive density $p \in \mathcal{C}^{(2 + \alpha)}(E)$ for some $\alpha >0$ and $\mu \not = \pi$
then we have
\begin{equation*}
I_{\Sigma,C}(\mu) - I_{\Sigma}(\mu) = 4T\int_{E}\left(\frac{1}{2}\nabla \phi(x)-\frac{1}{4T}\nabla U(x)\right)^{T}\Sigma(x)\left(\frac{1}{2}\nabla \phi(x)-\frac{1}{4T}\nabla U(x)\right)d\mu(x)\geq 0
\end{equation*}
where $\phi$ is the unique solution (up to a constant) of the equation
\begin{equation*}
{\textrm div}\left[p(x)\left(-\Sigma(x)\nabla U(x)+C(x)+2T\Sigma(x)\nabla \phi(x)\right)\right]=0.
\end{equation*}

Moreover, if the positive density  $p(x)$ satisfies $\text{div}\left(p(x)C(x)\right)\neq 0$, then we have $I_{\Sigma,C}(\mu) > I_{\Sigma}(\mu)$.
If $p(x)$ is such that $\text{div}\left(p(x)C(x)\right)=0$, then it has the form $p(x) = e^{2 G(x)}$ where $G$ is such that $G+U$ is an invariant quantity for the
vector field $C$ (i.e., $C \nabla (G+U) =0$).
\end{proposition}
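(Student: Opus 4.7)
The plan is to apply Theorem \ref{Th:Gartner} to both diffusions and reduce the difference of the two rate functions to a manifestly nonnegative quadratic form. For the reversible $\Sigma$-diffusion (drift $b=-\Sigma\nabla U$, diffusion matrix $a=2T\Sigma$), a short check shows that $\phi_\Sigma=U/(2T)$ solves the auxiliary equation \eqref{Eq:GartnerFormula1Constraint}, so the three terms of \eqref{Eq:GartnerFormula1} combine into the $\Sigma$-weighted reversible formula
\[
I_\Sigma(\mu)=\frac{T}{4}\int_E \left(\frac{\nabla p}{p}+\frac{\nabla U}{T}\right)^{T}\Sigma\left(\frac{\nabla p}{p}+\frac{\nabla U}{T}\right)d\mu,
\]
in direct analogy with \eqref{explicitreversible} and consistent with Proposition \ref{T:measure1}. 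For the $(\Sigma,C)$-diffusion, Theorem \ref{Th:Gartner} yields the analogous expression with the new $\phi$ solving the Poisson equation stated in the proposition.

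Subtracting, the $\nabla p^T\Sigma\nabla p/p^2$ term cancels and a straightforward bookkeeping leaves
\[
I_{\Sigma,C}(\mu)-I_\Sigma(\mu)= -\frac{1}{2}\int_E \frac{C\cdot \nabla p}{p}\,d\mu + T\int_E \nabla\phi^T\Sigma\nabla\phi\,d\mu -\frac{1}{4T}\int_E \nabla U^T\Sigma\nabla U\,d\mu.
\]
To recognize this as $4T\int_E\bigl(\tfrac{1}{2}\nabla\phi-\tfrac{1}{4T}\nabla U\bigr)^T\Sigma\bigl(\tfrac{1}{2}\nabla\phi-\tfrac{1}{4T}\nabla U\bigr)\,d\mu$, I would expand the square and match cross-terms via two integration-by-parts identities on the compact boundaryless manifold $E$. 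First, testing the Poisson constraint against $U$ rewrites $\int p\,\nabla\phi^T\Sigma\nabla U\,dx$ in terms of $\int p\,\nabla U^T\Sigma\nabla U\,dx$ and $\int p\,C\cdot\nabla U\,dx$. Second, the invariance hypothesis $\mathrm{div}(Ce^{-U/T})=0$ is pointwise equivalent to $\mathrm{div}(C)=C\cdot\nabla U/T$, which converts $\int C\cdot\nabla p\,dx$ into $-\tfrac{1}{T}\int p\,C\cdot\nabla U\,dx$. The two identities cause the $C$ and $\nabla U$ cross-terms to cancel in pairs and produce exactly the claimed quadratic form, which is nonnegative because $\Sigma$ is positive definite.

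For the strict-inequality dichotomy, set $\psi=\phi-U/(2T)$, so that the constraint becomes $2T\,\mathrm{div}(p\Sigma\nabla\psi)=-\mathrm{div}(pC)$ and the difference simplifies to $T\int_E\nabla\psi^T\Sigma\nabla\psi\,d\mu$. Since $\Sigma$ and $p$ are strictly positive, this vanishes iff $\nabla\psi\equiv 0$, i.e.\ $\psi$ is constant; in that case the constraint reduces to $\mathrm{div}(pC)=0$, and conversely $\mathrm{div}(pC)=0$ makes $\psi\equiv\mathrm{const}$ the unique (up to an additive constant) solution. Hence $I_{\Sigma,C}(\mu)>I_\Sigma(\mu)$ precisely when $\mathrm{div}(pC)\neq 0$. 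Writing $p=e^{2G}$ and using $\mathrm{div}(C)=C\cdot\nabla U/T$ to expand $\mathrm{div}(pC)=0$ gives $C\cdot\nabla(2G+U/T)=0$, so $G+U$ (up to the harmless rescaling by $T$) is a first integral of the vector field $C$.

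The main obstacle will be the algebraic bookkeeping in the middle step: the cross-terms must cancel exactly, and this cancellation is driven precisely by the invariance-preserving condition $\mathrm{div}(Ce^{-U/T})=0$. Once this identity is secured, the nonnegativity bound and the equality analysis follow immediately from positive definiteness of $\Sigma$ together with the uniqueness (up to constants) of solutions to the elliptic Poisson equation defining $\phi$.
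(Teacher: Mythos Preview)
Your proposal is correct and follows essentially the same strategy as the paper: apply Theorem~\ref{Th:Gartner} to both diffusions, subtract, convert the $C\cdot\nabla p$ term via the invariance condition ${\rm div}(C)=T^{-1}C\cdot\nabla U$, and then use the weak form of the constraint \eqref{Eq:GartnerFormula1Constraint} to produce the quadratic form. The only cosmetic difference is in the choice of test functions: the paper plugs $g=\tfrac{1}{2}\phi\pm\tfrac{1}{4T}U$ into \eqref{Eq:WeakForm1} in two steps (obtaining the intermediate representation \eqref{Eq:RepDifference2} along the way), whereas you test once against $g=U$ and match cross-terms directly---your route is slightly more economical, and your substitution $\psi=\phi-U/(2T)$ makes the equality analysis particularly transparent.
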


Clearly, if we set $\Sigma(x)=I$, then Proposition \ref{T:measure2} shows that for the irreversible perturbation of Example \ref{Ex:IrreversibleDiffusion} one has
\begin{equation*}
I_{C}(\mu) - I_{o}(\mu) = 4T\int_{E}\left(\frac{1}{2}\nabla \phi(x)-\frac{1}{4T}\nabla U(x)\right)^{T}\left(\frac{1}{2}\nabla \phi(x)-\frac{1}{4T}\nabla U(x)\right)d\mu(x)\geq 0.
\end{equation*}

This is nothing else but Theorem 2.2 in \cite{ReyBelletSpiliopoulos2014}. As a matter of fact \cite{ReyBelletSpiliopoulos2014,ReyBelletSpiliopoulos2014b} study in detail this special case via the lens of large deviations theory. We refer the interested reader to these articles for further details on this special case and related numerical simulation results. Next, in Proposition \ref{T:measure3} we investigate the situation where one performs both reversible and irreversible perturbations.

\begin{proposition}\label{T:measure3} Assume that the vector field $C(x) \not=0$ is such that ${\rm div}(C(x) e^{-U(x)/T}) =0$ and the matrix $\Sigma(x)-I$ is nonnegative definite. For any $\mu \in \mathcal{P}(E)$ we have $I_{\Sigma, C}(\mu) \ge
I_{o}(\mu)$. If  $\mu(dx) =p(x)dx$ is a measure with positive density $p \in \mathcal{C}^{(2 + \alpha)}(E)$ for some $\alpha >0$ and $\mu \not = \pi$
then we have
\begin{align*}
I_{\Sigma,C}(\mu) - I_{o}(\mu) &= \frac{T}{4}\int_{E}\left(\frac{\nabla p(x)}{p(x)}+\frac{1}{T}\nabla U(x)\right)^{T}(\Sigma(x)-I)\left(\frac{\nabla p(x)}{p(x)}+\frac{1}{T}\nabla U(x)\right)d\mu(x) \nonumber\\
&+4T\int_{E}\left(\frac{1}{2}\nabla \phi(x)-\frac{1}{4T}\nabla U(x)\right)^{T}\Sigma(x)\left(\frac{1}{2}\nabla \phi(x)-\frac{1}{4T}\nabla U(x)\right)d\mu(x)  \nonumber\\
&\geq 0\,.
\end{align*}
where $\phi$ is the unique solution (up to a constant) of the equation
\begin{equation*}
{\textrm div}\left[p(x)\left(-\Sigma(x)\nabla U(x)+C(x)+2T\Sigma(x)\nabla \phi(x)\right)\right]=0.
\end{equation*}

Moreover, if the positive density  $p(x)$ satisfies $\text{div}\left(p(x)C(x)\right)\neq 0$ and  $,\Sigma(x), \Sigma(x)-I$ are strictly positive definite, then we have $I_{\Sigma,C}(\mu) > I_{o}(\mu)$.
If $p(x)$ is such that $\text{div}\left(p(x)C(x)\right)=0$, then it has the form $p(x) = e^{2 G(x)}$ where $G$ is such that $G+U$ is an invariant quantity for the
vector field $C$ (i.e., $C \nabla (G+U) =0$).
\end{proposition}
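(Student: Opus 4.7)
The plan is to deduce Proposition \ref{T:measure3} from Propositions \ref{T:measure1} and \ref{T:measure2} by means of the telescoping identity
\begin{equation*}
I_{\Sigma,C}(\mu) - I_{o}(\mu) \,=\, \bigl(I_{\Sigma,C}(\mu) - I_{\Sigma}(\mu)\bigr) + \bigl(I_{\Sigma}(\mu) - I_{o}(\mu)\bigr).
\end{equation*}
First I would apply Proposition \ref{T:measure1} with the given matrix $\Sigma(x)$; the hypothesis that $\Sigma - I$ is nonnegative definite is precisely what is needed. This yields the first of the two integrals in the formula of Proposition \ref{T:measure3}, namely the reversible contribution involving $\Sigma-I$, together with its nonnegativity.

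Next I would invoke Proposition \ref{T:measure2}, using the hypothesis ${\rm div}(Ce^{-U/T})=0$ and the fact that $\Sigma - I$ nonnegative definite implies $\Sigma \ge I$ and hence $\Sigma$ is strictly positive definite (so Proposition \ref{T:measure2} is applicable). This produces the second integral in the formula, where the function $\phi$ is characterized by exactly the elliptic equation ${\rm div}[p(-\Sigma\nabla U + C + 2T\Sigma\nabla\phi)] = 0$ that appears in the statement of Proposition \ref{T:measure3}; its nonnegativity follows from the positive semidefiniteness of $\Sigma$. Adding the two identities yields the stated formula and, since both integrals are nonnegative, the inequality $I_{\Sigma,C}(\mu) \ge I_{o}(\mu)$.

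For the strict inequality under the stronger hypotheses, I would combine the strict positivity claims of the two propositions. Proposition \ref{T:measure1} guarantees the reversible integral is strictly positive whenever $p > 0$, $\Sigma - I$ is strictly positive definite, and $\mu \neq \pi$ (the last condition forces $\nabla p/p + \nabla U/T \not\equiv 0$, since otherwise $p \propto e^{-U/T}$ and $\mu = \pi$). Proposition \ref{T:measure2} guarantees the irreversible integral is strictly positive whenever ${\rm div}(pC) \neq 0$. Either condition alone already yields $I_{\Sigma,C}(\mu) > I_{o}(\mu)$, so the conjunction assumed in Proposition \ref{T:measure3} is more than enough. The characterization of the exceptional case ${\rm div}(pC) = 0$, namely $p = e^{2G}$ with $C \cdot \nabla(G+U) = 0$, is then transferred verbatim from Proposition \ref{T:measure2}, since that is exactly when the $\phi$-integral vanishes.

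The only real work, therefore, lies in Propositions \ref{T:measure1} and \ref{T:measure2}; Proposition \ref{T:measure3} is essentially a bookkeeping combination of the two. The one compatibility check worth verifying is that the $\phi$ provided by Proposition \ref{T:measure2} (applied with the matrix $\Sigma$ of Proposition \ref{T:measure3}) satisfies the same PDE that appears in the statement of Proposition \ref{T:measure3}, which it does by construction; no new analytic obstacle arises.
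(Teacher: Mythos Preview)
Your proposal is correct and matches the paper's own proof exactly: the paper also writes $I_{\Sigma,C}(\mu) - I_{o}(\mu) = [I_{\Sigma}(\mu) - I_{o}(\mu)] + [I_{\Sigma,C}(\mu) - I_{\Sigma}(\mu)]$ and simply invokes Propositions~\ref{T:measure1} and~\ref{T:measure2} for the two terms. Your write-up in fact contains more detail than the paper's (checking the hypotheses carry over, the strict-inequality cases, and the compatibility of the $\phi$-equation), but the approach is identical.
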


Notice that the correction term in Proposition \ref{T:measure3} is the sum of the correction terms from Propositions \ref{T:measure1} and \ref{T:measure2}. This comes to no surprise, as the set-up of Proposition \ref{T:measure3} is that of both reversible and irreversible perturbation. %As it turns out doing both perturbations simultaneously is better than doing one of the two alone.

Based on these results we then study the impact of these perturbations on the large deviations for the estimator $f_{t}=\frac{1}{t} \int_0^t f(X_s) ds$ itself. For $f \in {\mathcal C}(E)$ contraction principle implies that the ergodic average $\frac{1}{t} \int_0^t f(X_s) ds$
satisfies a large deviation  principle with rate function
\begin{equation*}
\tilde{I}_{f}(\ell)=\inf_{\mu\in\mathcal{P}(E)}\left\{I(\mu): \left<f,\mu\right>=\ell\right\}.
\end{equation*}

As we remarked in Section \ref{S:MainLemmas}, a Markov process whose rate function $\tilde{I}_{f}(\ell)$ is higher means that its ergodic average converges faster to its equilibrium value, in the sense that the rate of the exponential convergence is faster.

By general principles, see for example \cite{Gartner1977}, the rate function $\tilde{I}_{f} (\ell)$ is given by the Legendre transform
$\tilde{I}_{f} (\ell)= \sup_{\beta \in \mathbb{R}} (\ell \beta - \lambda(\beta f))$
 where
\begin{equation}\label{momentgeneratingfunction}
\lambda(\beta f) = \lim_{t \to \infty} \frac{1}{t} \log \mathbb{E}_x \left[e^{ \beta \int_0^t f(X_s) ds} \right]\,
\end{equation}

Using a Perron-Frobenius argument one can show that $\lambda(\beta f)$ is maximal eigenvalue of the operator
${\mL} + \beta f$ and that  $\lambda(\beta)$ is a smooth (real-analytic) function of $\beta$ and hence
\begin{equation*}
\tilde{I}_{f} (\ell)= \ell \widehat{\beta} - \lambda(\widehat{\beta} f )
\end{equation*}
where $\widehat{\beta}=\widehat{\beta}(\ell)$ is the unique  solution of  $\frac{d}{d\beta} \lambda(\beta f) = \ell$.

We denote by $\tilde{I}_{f,\Sigma,C}(\ell)$, $\tilde{I}_{f,\Sigma}(\ell)$, $\tilde{I}_{f,C}(\ell)$ and $\tilde{I}_{f,o}(\ell)$ the rate functions corresponding to $I_{\Sigma,C}(\mu)$, $I_{\Sigma}(\mu)$, $I_{C}(\mu)$ and $I_{o}(\mu)$ respectively.

\begin{theorem}\label{T:RateFcnObservables}
Consider $f \in \mathcal{C}^{(\alpha)}(E)$ and $\ell \in ( \min_x f(x), \max_x f(x))$ with $\ell \not= \int f d \pi$. Fix a vector field $C$ such that ${\rm div}(C(x) e^{-U(x)/T}) =0$ and let $\Sigma(x)$ be such that $\Sigma(x)-I$ is nonnegative definite.
Then we have
\begin{equation*}
{\tilde I}_{f,\Sigma,C} (\ell) \ge {\tilde I}_{f,\Sigma} (\ell) \ge {\tilde I}_{f,o}(\ell) \,,
\end{equation*}

If $\Sigma(x)-I$ is strictly positive definite, and  if there exists $\ell_0$ such that for this particular field $C$, ${\tilde I}_{f,\Sigma,C} (\ell_0) = {\tilde I}_{f,\Sigma}(\ell_0)$ or ${\tilde I}_{f,\Sigma,C} (\ell_0) = {\tilde I}_{f,o}(\ell_0)$ then we must have
\begin{equation}
\widehat{\beta}(\ell_0) f \,=\,  e^{-(G+U)}\left(\mathcal{L}_{0}+\mS\right)e^{G+U} \,,\label{Eq:ConditionObservable}
\end{equation}
where $G$ is such that $G+U$ is invariant under the particular vector field $C$ and $\mathcal{L}_{0}+\mS$ is the infinitesimal generator of the process given in Example \ref{Ex:ReversibleDiffusion}.
\end{theorem}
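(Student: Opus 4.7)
The plan is to derive Theorem \ref{T:RateFcnObservables} from the three measure-level Propositions \ref{T:measure1}--\ref{T:measure3} combined with the Perron--Frobenius and Legendre-duality description of the observable rate function recalled just before the statement. The monotonicity $\tilde{I}_{f,\Sigma,C}(\ell)\ge\tilde{I}_{f,\Sigma}(\ell)\ge\tilde{I}_{f,o}(\ell)$ is immediate from the contraction principle: writing $\tilde{I}_{f,\ast}(\ell)=\inf\{I_\ast(\mu):\langle f,\mu\rangle=\ell\}$, the constraint set is the same in all three cases, so the pointwise inequalities $I_{\Sigma,C}(\mu)\ge I_\Sigma(\mu)\ge I_o(\mu)$ supplied by Propositions \ref{T:measure1}--\ref{T:measure3} transfer to the infima, exactly as in Lemma \ref{L:LargeDeviationsObservable}.

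For the equality case, suppose first that $\tilde{I}_{f,\Sigma,C}(\ell_0)=\tilde{I}_{f,\Sigma}(\ell_0)$ and let $\mu^\ast$, with density $p^\ast$, minimize $\tilde{I}_{f,\Sigma,C}(\ell_0)$. The chain
\[
I_{\Sigma,C}(\mu^\ast)=\tilde{I}_{f,\Sigma,C}(\ell_0)=\tilde{I}_{f,\Sigma}(\ell_0)\le I_\Sigma(\mu^\ast)\le I_{\Sigma,C}(\mu^\ast)
\]
is forced to collapse to equalities, so $I_{\Sigma,C}(\mu^\ast)=I_\Sigma(\mu^\ast)$ and $\mu^\ast$ is simultaneously a minimizer of $\tilde{I}_{f,\Sigma}(\ell_0)$. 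The strict part of Proposition \ref{T:measure2} then gives $\mathrm{div}(p^\ast C)=0$, hence $p^\ast=e^{2G}$ with $C\cdot\nabla(G+U)=0$ (the statement absorbs the temperature constant into the notation). The alternative hypothesis $\tilde{I}_{f,\Sigma,C}(\ell_0)=\tilde{I}_{f,o}(\ell_0)$ sandwiches $\tilde{I}_{f,\Sigma}(\ell_0)$ at the same value, so the same $\mu^\ast$ achieves the reversible infimum and the preceding argument applies verbatim; here the strict positive-definiteness of $\Sigma-I$, via Proposition \ref{T:measure1}, is what forbids any further degeneracy and keeps the identification of $p^\ast$ consistent with the constraint $\ell_0\neq\bar f$.

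To convert the identification of $p^\ast$ into the identity \eqref{Eq:ConditionObservable}, I would exploit the variational characterization of the reversible rate: setting $u=(d\mu/d\pi)^{1/2}$ and using the Donsker--Varadhan form \eqref{DVreversible} for $\mL_0+\mS$, the problem becomes
\[
\tilde{I}_{f,\Sigma}(\ell_0)=\inf_u\left\{\langle u,-(\mL_0+\mS)u\rangle_\pi:\ \int u^2\,d\pi=1,\ \int fu^2\,d\pi=\ell_0\right\},
\]
whose Euler--Lagrange equation, with Lagrange multipliers $\widehat{\beta}(\ell_0)$ and $\lambda$, is precisely the Perron--Frobenius eigenvalue problem $(\mL_0+\mS+\widehat{\beta}(\ell_0)f)u=\lambda u$, in agreement with the duality $\tilde{I}_{f,\Sigma}(\ell_0)=\ell_0\widehat{\beta}(\ell_0)-\lambda(\widehat{\beta}(\ell_0)f)$. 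Since the minimizer $\mu^\ast$ produced above has $u\propto e^{G+U}$ in the same constant-absorption convention, dividing the eigenvalue equation by $u$ and rearranging yields \eqref{Eq:ConditionObservable}. The main obstacle I anticipate is not the algebra but the analytic scaffolding of this variational step: one must know that the constrained infimum is attained by a smooth, strictly positive density so that Propositions \ref{T:measure1}--\ref{T:measure3} apply to $\mu^\ast$, and that this minimizer coincides with the Perron--Frobenius eigenfunction of the tilted generator $\mL_0+\mS+\widehat{\beta}(\ell_0)f$. On the compact manifold $E$, with $\Sigma$ strictly positive definite and $f\in\mathcal{C}^\alpha$, elliptic regularity delivers a $\mathcal{C}^{2+\alpha}$ density and the Krein--Rutman/Perron--Frobenius theorem supplies a strictly positive principal eigenfunction, after which the remaining manipulation is a matter of matching constants between $G$, $T$, and the normalization of $u$.
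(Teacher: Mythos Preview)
Your argument is correct and tracks the paper closely. The monotonicity part is identical: infimize over the same constraint set and invoke Propositions \ref{T:measure1}--\ref{T:measure3}. For the equality case, your sandwiching argument and the appeal to the strict part of Proposition \ref{T:measure2} to obtain $\mathrm{div}(p^\ast C)=0$ and $p^\ast=e^{2G}$ with $C\cdot\nabla(G+U)=0$ are exactly what the paper does.

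The one place where you diverge is in the derivation of \eqref{Eq:ConditionObservable}. You work with the \emph{reversible} constrained variational problem for $\tilde{I}_{f,\Sigma}$ and read off its Euler--Lagrange equation as the Perron--Frobenius eigenvalue problem for $\mL_0+\mS+\widehat{\beta} f$, then insert $u\propto e^{G+U}$. The paper instead starts from the Perron--Frobenius eigenpair $(\lambda(\widehat\beta f),e^{\phi_{\widehat\beta}})$ for the \emph{irreversible} operator $\mL+\widehat\beta f$, uses that $p^\ast$ is the invariant density of the twisted generator $\mL+\nabla\phi_{\widehat\beta}\cdot\nabla$, drops the $C\cdot\nabla$ contribution from both the invariant-measure equation and the eigenvalue equation via $\mathrm{div}(Cp^\ast)=0$ and $C\cdot\nabla\phi_{\widehat\beta}=0$, and only then lands on the reversible eigenvalue equation \eqref{Eq:ConditionObservable}. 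Your route is slightly more self-contained (no twisted-process machinery), but it leaves implicit that the Lagrange multiplier $\widehat\beta(\ell_0)$ you extract from the reversible problem coincides with the $\widehat\beta(\ell_0)$ coming from the Legendre dual of the full irreversible problem; the paper's route makes this identification automatic, since the irreversible eigenvalue equation collapses to the reversible one once $C\cdot\nabla\phi_{\widehat\beta}=0$. Either way the handling of the temperature and normalization constants is equally informal in both arguments.
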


We conclude this section with the proofs of Propositions \ref{T:measure1}, \ref{T:measure2}, \ref{T:measure3} and Theorem \ref{T:RateFcnObservables}.

\begin{proof}[Proof of Proposition \ref{T:measure1}.]
Consider the general situation of Theorem \ref{Th:Gartner} with $a(x)=2T\Sigma(x)$ and $b(x)=-\Sigma(x)\nabla U(x)$. We then have
\begin{align*}
I_{\Sigma}(\mu)&=\frac{2T}{8}\int_{E}\frac{\nabla p(x)\Sigma(x)\nabla p(x)}{p^{2}(x)}d\mu(x)+\frac{1}{2}\int_{E}\frac{\nabla p(x)\Sigma(x)\nabla U(x)}{p(x)}d\mu(x)+T\int_{E}\nabla\phi(x)\Sigma(x)\nabla\phi(x) d\mu(x),
\end{align*}
where due to reversibility we have $\phi(x)=\frac{1}{2T}U(x)+\text{constant}$. Notice that $I_{o}(\mu)$ is nothing else but $I_{\Sigma}(\mu)$ with $\Sigma(x)=I$. Taking then, the difference $I_{\Sigma}(\mu)-I_{o}(\mu)$ and doing some straightforward algebra, we obtain the statement of the proposition. Clearly,  $I_{\Sigma}(\mu)-I_{o}(\mu)>0$ if $\Sigma(x)-I$ is strictly positive definite.
\end{proof}

\begin{proof}[Proof of Proposition \ref{T:measure2}.]
Considering the general situation of Theorem \ref{Th:Gartner}, we obtain for the difference
\begin{align*}
I_{\Sigma,C}(\mu) - I_{\Sigma}(\mu)&=\int_{E}\left[T\left(\nabla\phi(x)\Sigma(x)\nabla\phi(x)\right)-\frac{1}{4T}\nabla U(x)\Sigma(x)\nabla U(x)-\frac{1}{2} \frac{C(x)\nabla p(x)}{p(x)}\right]d\mu(x)
\end{align*}

Using the condition
$\textrm{div}\left(C(x)e^{-U(x)/T}\right)=0$, which can be rewritten as $\textrm{div}C(x)=T^{-1} C(x)\nabla U(x)$, and integrating by parts we get for the last term of the last display
\begin{align*}
\int_{E}\frac{C(x)\nabla p(x)}{p(x)}d\mu(x) &=\int_{E} C(x)\nabla p(x)dx=-\int_{E} \textrm{div}C(x) p(x)dx=-\int_{E} \textrm{div}C(x) d\mu(x)\nonumber\\
&=-\int_{E} \frac{1}{T} C(x)\nabla U(x) d\mu(x).
\end{align*}

Thus we have obtained
\begin{align*}
I_{\Sigma,C}(\mu) - I_{\Sigma}(\mu)&=\int_{E}\left[T\left(\nabla\phi(x)\Sigma(x)\nabla\phi(x)\right)-\frac{1}{4T}\nabla U(x)\Sigma(x)\nabla U(x)+\frac{1}{2T} C(x)\nabla U(x)\right]d\mu(x)
\end{align*}

Recall now that $\phi(x)$ is the unique solution, up to constants, of the equation
\begin{equation*}
{\textrm div}\left[p(x)\left(-\Sigma(x)\nabla U(x)+C(x)+2T\Sigma(x)\nabla \phi(x)\right)\right]=0.
\end{equation*}

Its weak form reads as follows
\begin{equation}
 \int_{E}\nabla g(x)\left[2T\Sigma(x)\nabla \phi(x)-\Sigma(x)\nabla U(x)+C(x)\right]d\mu(x)=0, \quad \textrm{for every }g\in\mathcal{C}^{1}(E)\label{Eq:WeakForm1}
\end{equation}
and we can pick freely $g\in\mathcal{C}^{1}(E)$. Let us  first choose $g(x)=\frac{1}{2}\phi(x)+\frac{1}{4T}U(x)$. Then, (\ref{Eq:WeakForm1}) gives
\begin{equation*}
\int_{E}\left[T\left(\nabla\phi(x)\Sigma(x)\nabla\phi(x)\right)-\frac{1}{4T}\nabla U(x)\Sigma(x)\nabla U(x)\right]d\mu(x)=-\int_{E}C(x)\left(\frac{1}{2}\nabla\phi(x)+\frac{1}{4T}\nabla U(x)\right)d\mu(x)
\end{equation*}
and thus, we obtain
\begin{align}
I_{\Sigma,C}(\mu) - I_{\Sigma}(\mu)&=\int_{E}C(x)\left(-\frac{1}{2}\nabla\phi(x)+\frac{1}{4T}\nabla U(x)\right)d\mu(x)\label{Eq:RepDifference2}
\end{align}

Choosing then $g(x)=\frac{1}{2}\phi(x)-\frac{1}{4T}U(x)$ and we get from (\ref{Eq:WeakForm1}) and the latter display
\begin{equation*}
I_{\Sigma,C}(\mu) - I_{\Sigma}(\mu)=4T\int_{E}\left(\frac{1}{2}\nabla \phi(x)-\frac{1}{4T}\nabla U(x)\right)^{T}\Sigma(x)\left(\frac{1}{2}\nabla \phi(x)-\frac{1}{4T}\nabla U(x)\right)d\mu(x)
\end{equation*}
which is the statement of the proposition. It is clear that $I_{\Sigma,C}(\mu) - I_{\Sigma}(\mu)\geq 0$. If $\Sigma(x)$ is strictly positive definite and $\mu$ possesses a strictly positive density, it is clear that
$I_{\Sigma,C}(\mu) - I_{\Sigma}(\mu)= 0$ if and only if $\text{div}\left(pC\right)= 0$. In other words, $I_{\Sigma,C}(\mu) - I_{\Sigma}(\mu)> 0$ if and only if $\text{div}\left(pC\right)\neq 0$ and $\Sigma(x)$ is strictly positive definite. It is clear that if $\text{div}\left(p(x)C(x)\right)= 0$, then the requirement $\text{div}\left(C(x)e^{-U(x)/T}\right)= 0$ implies that $p$ can be written as $p(x)=e^{G(x)}$ with $C(x)\nabla(G(x)+U(X))=0$.
\end{proof}

\begin{proof}[Proof of Proposition \ref{T:measure3}.]
We write
\begin{align*}
I_{\Sigma,C}(\mu) - I_{o}(\mu)&=\left[I_{\Sigma}(\mu) - I_{o}(\mu)\right]+\left[I_{\Sigma,C}(\mu) - I_{\Sigma}(\mu)\right]
\end{align*}

Notice that the first term on the right hand side of the last display is the difference $I_{\Sigma}(\mu) - I_{o}(\mu)$ from Proposition \ref{T:measure1}, whereas the second term is the difference $I_{\Sigma,C}(\mu) - I_{\Sigma}(\mu)$ from Proposition \ref{T:measure2}. This concludes the proof of the proposition.
\end{proof}

\begin{proof}[Proof of Theorem \ref{T:RateFcnObservables}.]

 Analogously to Proposition 4.1 of \cite{ReyBelletSpiliopoulos2014} we have that for each one of the infimization problems defining ${\tilde I}_{f,\Sigma} (\ell)$, ${\tilde I}_{f,\Sigma,C} (\ell)$ and ${\tilde I}_{f,o} (\ell)$ there is a corresponding infimizing measure (different for each case)
 $\mu^*(dx) = p^{*}(x)dx$  with $p^{*}(x) > 0$ and $p^{*}(x)  \in \mathcal{C}^{(2+\alpha)}(E)$ that attains the infimum. For example, in the case of only a reversible perturbation we have that
 \begin{equation*}
 \tilde{I}_{f,\Sigma}(\ell) = I_{\Sigma}(\mu^*) \,.
 \end{equation*}

 Then, a straightforward contradiction argument that is based on Propositions \ref{T:measure1}, \ref{T:measure2} and \ref{T:measure3} leads to the proof of the statement ${\tilde I}_{f,\Sigma,C} (\ell) \ge {\tilde I}_{f,\Sigma} (\ell) \ge {\tilde I}_{f,o}(\ell)$.

The derivation of the PDE (\ref{Eq:ConditionObservable}) that characterizes the situation where the rate function does not increase goes as follows. It can be seen that $p^{*}(x)$ is the invariant density corresponding to the infinitesimal generator ${\mathcal L} + \nabla\phi_{ \widehat{\beta}} \cdot \nabla$ where $e^{\phi_{ \widehat{\beta}}}$ is the eigenfunction associated to the eigenvalue $\lambda(\widehat{\beta} f)$ defined in (\ref{momentgeneratingfunction}) for the operator $\mL+\beta f$. We recall that $\widehat{\beta}=\widehat{\beta}(\ell)$ is the unique  solution of  $\frac{d}{d\beta} \lambda(\beta f) = \ell$. Due to the dependence of $p^{*}(x)$ on  $\widehat{\beta}$, let us write $p_{\widehat{\beta}}(x)$ for $p^{*}(x)$. In other words, $p_{\widehat{\beta}}(x)$  should satisfy
 \begin{equation}\label{l1}
({\mathcal L} + \nabla\phi_{ \widehat{\beta}} \cdot \nabla)^* p_{ \widehat{\beta}} =0 \,,
\end{equation}
where $e^{\phi_{\widehat{\beta}}(x)}$ is the eigenfunction associated to the eigenvalue $\lambda(\widehat{\beta} f)$, i.e.,
\begin{equation}\label{l2}
({\mathcal L} + \widehat{\beta} f) e^{\phi_{ \widehat{\beta}}} = \lambda(\widehat{\beta} f) e^{\phi_{ \widehat{\beta}}} \,
\end{equation}
 If the rate function does not increase, then one should have that ${\rm div}( C p_{\widehat{\beta}}) =0$. Since ${\rm div}( C p_{\widehat{\beta}}) =0$ we have that in fact
\[
(\mL_{0}+\mS+ \nabla\phi_{ \widehat{\beta}} \cdot \nabla)^{*}p_{ \widehat{\beta}}=0.
\]
Since  $\mL_{0}+\mS+ \nabla\phi_{ \widehat{\beta}} \cdot \nabla$ is the generator of a
reversible ergodic Markov process, we then obtain that
$p_{\widehat{\beta}} = e^{ (\phi -U) + const}$. Thus, $\phi = G+U$ and
$C \cdot \nabla \phi=0$ and (\ref{l2}) reduces to
\begin{equation}\label{l3}
({\mathcal L}_{0}+\mS + \widehat{\beta} f) e^{\phi_{ \widehat{\beta}}} = \lambda(\widehat{\beta} f) e^{\phi_{ \widehat{\beta}}} \,
\end{equation}
Since changing $f$ into $f+c$ leaves $\phi$ unchanged, but changes $\lambda(\widehat{\beta} f)$ to $\lambda(\widehat{\beta} f)+\widehat{\beta}c $, we get that the equation $({\mathcal L}_{0}+\mS + \widehat{\beta} f) e^{\phi_{ \widehat{\beta}}} = \lambda(\widehat{\beta} f) e^{\phi_{ \widehat{\beta}}}$ implies (\ref{Eq:ConditionObservable}).
 \end{proof}

\section{Conclusions}\label{S:Conclusions}
In this paper we have demonstrated in a very general setting that perturbations of the generator of reversible Markov processes by reversible negative definite  operators or by irreversible (anti-selfadjoint) operators that maintain the invariant measure lead to improvement in sampling. In particular, we have shown that spectral gap decreases, the asymptotic variance of trajectory time averages decreases and the large deviations rate function that controls the decay rate of the tail distribution of the estimator increases. In all these three cases, we have worked with the generator of the given Markov process. Moreover, we have provided specific reversible and irreversible perturbations for cases of interest such as continuous time Markov chains, Markov jump processes as well as diffusion processes.

Clearly, there are many open questions to address here, perhaps the most important ones being optimal perturbation in different concrete cases of interest, as well as the involved numerical challenges, see \cite{ReyBelletSpiliopoulos2014, DuncanLelievrePavliotis2015}. Some preliminary results on optimal perturbations for the case of quadratic $U(x)$ (the Gaussian case) can be found in \cite{DuncanLelievrePavliotis2015, HwangMaSheu1993, LelievreNierPavliotis2012}. In addition, in most of the cases, what is guaranteed is that ergodic behavior does not become worse. It is interesting to provide concrete conditions for strict improvement, such as the ones for the diffusion case presented in \cite{ReyBelletSpiliopoulos2014} and in Section \ref{S:LDPanalysisDiffusions} of the present paper.

\end{document}